\newcommand{\A}{\mathcal{A}}
\newcommand{\C}{\mathbb{C}}
\newcommand{\F}{\mathcal{F}}
\newcommand{\E}{\mathbb{E}}
\newcommand{\1}{\mathbb{1}}
\newcommand{\Z}{\mathbb{Z}}
\newcommand{\N}{\mathbb{N}}
\newcommand{\T}{\mathbb{T}}
\newcommand{\rea}{\text{Re}}
\newcommand{\ds}{\displaystyle}
\newtheorem*{thank}{\ \ \ \textbf{Acknowledgment}}
\newcounter{tictac}
\def\1{\,\rlap{\mbox{\small\rm 1}}\kern.15em 1}
\def\build#1_#2^#3{\mathrel{\mathop{\kern 0pt#1}\limits_{#2}^{#3}}}
\def\tend#1#2{\build\hbox to 12mm{\rightarrowfill}_{#1\rightarrow #2}^{ }}
\def\converge#1#2#3#4{\build\hbox to
#1mm{\rightarrowfill}_{#2\rightarrow #3}^{\hbox{\scriptsize #4}}}
\newcommand{\printdate}{\today}
\theoremstyle{definition}
\newtheorem{thm}{Theorem}[section]
\newtheorem{Prop}[thm]{Proposition}
\newtheorem{rem}[thm]{Remark}
\newcommand{\cb}{{\mathcal B}}
\newcommand{\ca}{{\mathcal A}}
\newcommand{\bmu}{\bm \mu}
\newcommand{\bmnu}{\bm \nu}
\newcommand{\bml}{\bm \lambda}
\newtheorem*{xconj}{Conjecture}
\def\En#1{(\lfloor #1 \rfloor)}
\newcommand{\beq}{\begin{equation}}
\newcommand{\eeq}{\end{equation}}
\begin{document}
\title[Ergodic bilinear averages]{On the homogeneous ergodic bilinear averages with M\"{o}bius and Liouville weights}
\author{E.\ H. \ {el} Abdalaoui}

\address{ University of Rouen Normandy \\
  LMRS UMR 60 85 CNRS\\
Avenue de l'universit\'e, BP.12 \\
76801 Saint Etienne du Rouvray - France . \\}
\email{elhoucein.elabdalaoui@univ-rouen.fr}

\subjclass[2010]{Primary: 37A30;  Secondary: 28D05, 5D10, 11B30, 11N37, 37A45}

\keywords{multilinear ergodic averages, Furstenberg's problem of a.e. convergence, Liouville function, M\"{o}bius function, Birkhoff ergodic theorem,
Bourgain's double recurrence theorem, Zhan's estimation, Davenport-Hua's estimation, Sarnak's conjecture.\\
%$$
%{}^{\flat}\includegraphics[scale=0.1]{dali.eps}
% $$
\printdate }
 
\begin{abstract}It is shown that the homogeneous ergodic bilinear averages with M\"{o}bius or Liouville weight converge almost surely to zero, that is, 
if $T$ is a map acting on a probability space $(X,\ca,\nu)$, and $a,b \in \Z$,
%$T_1,T_2$ are the powers of $T$, 
then for any $f,g \in L^2(X)$, for almost all $x \in X$,
$$
\frac1{N}\sum_{n=1}^{N}\bmnu(n) f(T^{an}x)g(T^{bn}x) \tend{N}{+\infty}0,
$$

%and
%$$
%\frac1{N}\sum_{n=1}^{N}\bmu(n) f(T^{np}x)g(T^{nq}x) \tend{N}{+\infty}0
%$$
\noindent where $\bmnu$ is the Liouville function or the M\"{o}bius function. We further obtain that
the convergence almost everywhere holds for the short interval with the help of Zhan's estimation.
 %Our proof yields also  an accessible proof of Bourgain's double recurrence theorem.
Our proof yields also a simple proof of Bourgain's double recurrence theorem.
Moreover,
we establish that  if $T$ is weakly mixing and its restriction to its Pinsker algebra has singular spectrum, then for any integer $k \geq 1$, for any $f_j \in L^{\infty}(X),$ $j=1,\cdots,k$, for almost all $x \in X$, we have
$$\frac1{N}\sum_{n=1}^{N}\bmnu(n) \prod_{j=1}^{k}f_j(T^{nj}x)\tend{N}{+\infty}0.$$
%and
%$$\frac1{N}\sum_{n=1}^{N}\bmu(n) \prod_{j=1}^{k}f(T^{nj}x)\tend{N}{+\infty}0$$
\end{abstract}
\maketitle
\section{Introduction}
The purpose of this short note is to establish that the homogeneous ergodic bilinear averages with M\"{o}bius or Liouville weight converge almost surely to zero. Our result, 
in some sense, extend Sarnak's result which assert that the ergodic averages with M\"{o}bius or Liouville weight converge almost surely 
to zero \cite{Sarnak}. Moreover, our proof allows us to obtain a simple proof of Bourgain's double recurrence theorem \cite{Bourgain-D}.
%does not used Bourgain's double recurrence theorem \cite{Bourgain-D} but the spirit of its proof.\\

The problem of the convergence almost everywhere (a.e.) of the ergodic  multilinear averages was introduced by Furstenberg in \cite[Question 1 p.96]{Fbook}. Later, J. Bourgain proved that the homogeneous ergodic bilinear averages converges
almost surely
%provided that the maps are the powers of a given map
\cite{Bourgain-D}. Subsequently, I. Assani established that the convergence a.e. of the homogeneous ergodic multilinear averages holds %if the maps are the powers of a given map  for which its
if the restriction of the map to its Pinsker algebra has a singular spectrum. Assani's proof is based essentially on Bourgain's theorem \cite{Bourgain-D} combined with Host's joining theorem \cite{Host}. Very recently, E. H. el Abdalaoui proved that there is a subsequence for which
%To the best knowdge's of the author, the .
the convergence a.e. of the ergodic multilinear averages holds \cite{elabdal-F}.
For a recent survey on the \linebreak Furstenberg's problem on the ergodic multilinear averages, we refer to \cite{Ye1}.
Let us mention also that C. Demeter in \cite{Demeter} obtained an alternative proof of Bourgain's theorem \cite{Bourgain-D}.\\

For the almost everywhere convergence of the homogeneous ergodic bilinear averages with weight, I. Assani, D. Duncan, and R. Moore proved \`a la Wiener-Wintner 
that the exponential sequences $(e^{2\pi i nt})_{n \in \Z}$ are good weight for the homogeneous ergodic bilinear averages \cite{AssaniDM}. 
Subsequently, I. Assani and R. Moore  showed that the polynomials exponential sequences $\big(e^{2\pi i P(n)}\big)_{n \in \Z}$  are also uniformly good weights for the homogeneous ergodic bilinear averages
\cite{AssaniM}. One year later, I. Assani \cite{Assani-Nil} and P. Zorin-Kranich \cite{Zorich} proved independently that the nilsequences are
 uniformly good weights for the homogeneous ergodic bilinear averages. Their proof depend heavily on Bourgain's theorem. Let us further notice that Zorin-Kranich's proof yields that  
if the ergodic multilinear averages converges a.e. then the nilsequences are a good weight for the ergodic multilinear averages. \\

Here, our goal is to prove that the M\"{o}bius and  Liouville functions are a good weight for the homogeneous ergodic bilinear averages. Our proof follows closely Bourgain's proof \cite{Bourgain-D}. 
We thus apply Calder\'{o}n transference principal in order to establish some kind of maximal inequality. Furthermore, 
we apply Assani's result to prove that the M\"{o}bius and Liouville functions are a good weight for the homogeneous ergodic 
multilinear averages if the restriction of the map to its Pinsker algebra has singular spectrum.\\

Let us recall that Sarnak announced in his seminal paper \cite{Sarnak} that the M\"{o}bius function is a good weight in $L^2$ for the ergodic averages. In \cite{elabdalDCDS}, the authors apply Davenport's estimation combined with Etamedi's trick \cite{Etemadi} to obtain a simple proof of Sarnak's result. Therein, they proved that the M\"{o}bius function is a good weight in $L^1$ for the ergodic averages.

\section{Notations and Tools}
The Liouville function is defined for the positive integers $n$ by
$$
\bml=(-1)^{\Omega(n)},
$$
where $\Omega(n)$ is the length of the word $n$ is the alphabet of prime, that is, $\Omega(n)$ is the number of prime factors of $n$ counted with multiplicities. The M\"{o}bius function is given by
\begin{equation}\label{Mobius}
\bmu(n)= \begin{cases}
 1 {\rm {~if~}} n=1; \\
\bml(n)  {\rm {~if~}} n
{\rm {~is~the~product~of~}} r {\rm {~distinct~primes}}; \\
0  {\rm {~if~not}}
\end{cases}
\end{equation}
These two functions are of great importance in number theory since the Prime Number Theorem is equivalent to \begin{equation}\label{E:la}
\sum_{n\leq N}\bml(n)={\rm o}(N)=\sum_{n\leq N}\bmu(n).
\end{equation}
Furthermore, there is a connection between these two functions and Riemann zeta function, namely
$$
\frac1{\zeta(s)}=\sum_{n=1}^{\infty}\frac{\bmu(n)}{n^s} \text{ for any }s\in\mathbb{C}\text{ with }\rea(s)>1.
$$
Moreover, Littlewood proved that the estimate
\[
\left|\ds \sum_{n=1}^{x}\bmu(n)\right|=O\left(x^{\frac12+\varepsilon}\right)\qquad
{\rm as} \quad  x \longrightarrow +\infty,\quad \forall \varepsilon >0
\]
is equivalent to the Riemann Hypothesis (RH) (\cite[pp.315]{Titchmarsh}).\\

Here, we will need the following Davenport-Hua's estimation \cite{Da},  \cite[Theorem 10.]{Hua}: for each $A>0$, for any $k \geq 1$, we have
\begin{equation}\label{vin}
\max_{z \in \T}\left|\displaystyle\sum_{n \leq N}z^{n^k}\bml(n)\right|\leq C_A\frac{N}{\log^{A}N}\text{ for some }C_A>0.
\end{equation}
This estimate has been generalized for the short interval by T. Zhan \cite{Ztao} as follows:
for each $A>0$, for any $\varepsilon>0$, we have
\begin{equation}\label{Zhantao}
\max_{z \in \T}\left|\displaystyle\sum_{ N \leq n \leq N+M}z^n\bml(n)\right|\leq C_{A,\varepsilon}\frac{M}{\log^{A}(M)}\text{ for some }C_{A,\varepsilon}>0,
\end{equation}
provided that $M \geq N^{\frac58+\varepsilon}$.\\

The inequalities \eqref{vin} and \eqref{Zhantao} can be established also for the M\"{o}bius function by applying carefully the following identity (see for instance \cite[section 6.]{GreenT} or \cite{Batman-Chowla}):\\

$$\bml(n)=\sum_{d:d^2 | n}\bmu(\frac{n}{d^2}).$$
  
Davenport-Hua's estimation was extended by Green-Tao to the nilsequences setting. We refer to Theorem 1.1 in \cite{GreenT} for the exact estimation 
and for the definition of the nilsequences.

\vskip 0.2cm
In our setting, we consider also the ergodic multilinear averages given by
$$\frac1{N}\sum_{n=1}^{N}\prod_{i=1}^{k}f_i(T_i^nx),$$
where $k\geq 2$, $(X,\cb,\mu,T_i)_{i=1}^{k}$ are a finite family of dynamical systems where $\mu$ is a probability measure, $T_i$ are commuting  invertible  measure preserving transformations and  $f_1,f_2,\cdots,f_k$  a finite family of bounded functions. The bilinear case corresponds to $k=2$.\\

The ergodic multilinear averages is said to be homogeneous if $T_i$, $i=1,\cdots k,$ are the powers of some given map $T$.\\

For the convergence a.e., J. Bourgain proved

\begin{thm}[Bourgain's double recurrence theorem \cite{Bourgain-D}]\label{TDB}Let $(X,\ca,\mu,T)$ be an ergodic dynamical system, and $T_1,T_2$ be powers of $T$. Then, for any $f,g \in L^{\infty}(X)$, for almost all $x \in X$,
$$\frac1{N}\sum_{n=1}^{N}f(T_1^nx)g(T_2^nx)$$
converges.
\end{thm}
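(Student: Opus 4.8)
The plan is to follow Bourgain's strategy and split the almost everywhere convergence into two independent ingredients: a bilinear maximal inequality and convergence on a dense class of functions. Throughout, write $T_1=T^a$, $T_2=T^b$ and set
$$A_N(f,g)(x)=\frac1N\sum_{n=1}^{N}f(T^{an}x)\,g(T^{bn}x).$$
First I would invoke the Calder\'on transference principle to move the problem from the dynamical system $(X,\ca,\mu,T)$ to the integer shift on $\Z$: it suffices to establish a (weak type) maximal inequality for the discrete bilinear averages $\frac1N\sum_{n=1}^{N}F(x+an)G(x+bn)$ acting on functions $F,G$ on $\Z$, since transference then returns the corresponding inequality for $A_N$ on $X$.

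Second, and this is the analytic heart, I would prove the maximal inequality
$$\Big\|\sup_N|A_N(f,g)|\Big\|_{L^1(X)}\leq C\,\|f\|_{L^2(X)}\,\|g\|_{L^2(X)}$$
(or a weak-type substitute). On $\Z$ one expands by the Fourier transform, so that $A_N$ becomes a Fourier multiplier operator with symbol
$$m_N(\alpha,\beta)=\frac1N\sum_{n=1}^{N}e^{2\pi i(an\alpha+bn\beta)}.$$
Following the circle method, I would decompose the frequency space into major arcs (where $\alpha,\beta$ lie near rationals with small denominator) and minor arcs, bound the major-arc contribution by comparison with continuous averages together with a Rademacher--Menshov type square function, and control the minor-arc part using the decay of the associated exponential sums.

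Third, I would establish convergence on a dense class. Splitting $L^2(X)$ into the Kronecker factor and its orthocomplement, the terms in which at least one function is orthogonal to the Kronecker factor contribute averages that tend to zero (handled again by the maximal inequality plus a van der Corput argument), so it suffices to treat the case where one function, say $f$, is an eigenfunction, $Tf=e^{2\pi i\theta}f$. Then $f(T^{an}x)=e^{2\pi i an\theta}f(x)$ and
$$A_N(f,g)(x)=f(x)\cdot\frac1N\sum_{n=1}^{N}e^{2\pi i an\theta}\,g(T^{bn}x),$$
which is a Wiener--Wintner average whose a.e. convergence is guaranteed by the Wiener--Wintner theorem. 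Combining the maximal inequality with a.e. convergence on this dense class through the Banach principle then yields a.e. convergence of $A_N(f,g)$ for all $f,g\in L^\infty(X)$.

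The main obstacle is the bilinear maximal inequality, and specifically the minor-arc analysis. Unlike the linear case, the two shifts $T^{an}$ and $T^{bn}$ genuinely interact and the operator does not factor, so one cannot reduce to a product of one-dimensional maximal operators; controlling the off-diagonal frequencies is precisely the delicate Fourier-analytic estimate that constitutes the technical core of Bourgain's argument. The remaining steps (transference, the Kronecker reduction, and the appeal to Wiener--Wintner) are, by comparison, routine.
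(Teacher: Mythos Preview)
Your outline is essentially Bourgain's own argument and is a correct roadmap. The paper, however, takes a genuinely different route: it gives no direct proof of Theorem~\ref{TDB} via a major/minor arc decomposition, the Kronecker factor, or Wiener--Wintner. Instead, the paper first establishes the \emph{weighted} statement (Theorem~\ref{Mainofmain}) that the bilinear averages with M\"obius or Liouville weight converge to zero a.e., and afterwards simply asserts in one line that ``As a consequence, we have proved Theorem~\ref{TDB}.'' The engine behind the paper's maximal inequality (Theorems~\ref{CalderonI} and~\ref{MaximalI}) is the Davenport--Hua estimate $\max_{|z|=1}\big|\sum_{n\le N}\bmnu(n)z^n\big|\ll N/\log^A N$: after Cauchy--Schwarz and the spectral theorem, this yields for each lacunary $N$ an $\ell^2$ bound of size $C_A/\log^A N$, and summability over the lacunary sequence $I_\rho$ then gives the maximal control directly. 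This bypasses your minor-arc analysis entirely, but it depends essentially on the cancellation supplied by $\bmnu$; for the unweighted averages no such uniform exponential-sum bound exists, and the paper does not spell out how the unweighted convergence is then deduced. In short, your proposal is the standard and self-contained path, while the paper's argument is shorter for the weighted problem but leaves the passage to Theorem~\ref{TDB} itself unexplained.
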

Applying Host's joining theorem \cite{Host} combined with Bourgain's theorem (Theorem \ref{TDB}), I. Assani proved
\begin{thm}[Assani's multilinear recurrence theorem for a map with singular spectrum \cite{Assani} ]\label{TMA}
Let $(X,\ca,\mu,T)$  be a weakly mixing dynamical system such that the
restriction of $T$ to its Pinsker algebra has singular spectrum, then, for all
positive integers $k$, for all $f_i \in L^\infty(X)$, $i=1,\cdots,k$, for almost all $x \in X$, we have
$$\frac1{N}\sum_{n=1}^{N}\prod_{i=1}^{k}f_i(T^{in}x)\tend{N}{+\infty} \prod_{i=1}^{k}\int f_i(x)d\mu.$$
\end{thm}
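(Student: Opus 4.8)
The plan is to reduce the statement to a vanishing assertion and then promote $L^2$ convergence to pointwise convergence by an induction on $k$. Writing $c_i=\int f_i\,d\mu$ and splitting $f_i=c_i+(f_i-c_i)$, multilinearity expands $\prod_{i=1}^{k}f_i(T^{in}x)$ into $2^{k}$ terms; the purely constant term contributes $\prod_i c_i=\prod_i\int f_i\,d\mu$, while every other term carries at least one mean-zero factor. It therefore suffices to show that the averages tend to $0$ almost everywhere whenever some $f_{i_0}$ satisfies $\int f_{i_0}\,d\mu=0$. For such a configuration the $L^2$ limit is already $0$: weak mixing forces the $k$-linear averages to converge in $L^2(\mu)$ to $\prod_i\int f_i\,d\mu$ (the characteristic factors are trivial), so the whole content of the theorem lies in upgrading this to convergence almost everywhere.

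For the pointwise statement I would induct on $k$. The case $k=1$ is Birkhoff's ergodic theorem, and $k=2$ is exactly Bourgain's double recurrence theorem (Theorem~\ref{TDB}): the averages converge a.e., and since they converge to $0$ in $L^2$ as well, the pointwise limit must be $0$ almost everywhere. For the inductive step I would apply a van der Corput / Cauchy--Schwarz estimate, which bounds the square of the $k$-linear average by a Ces\`aro average, over a shift parameter $h$, of correlations built from the differenced functions $f_i\cdot\overline{f_i\circ T^{ih}}$ integrated against the off-diagonal self-joinings of the system. This is where the hypotheses are used: weak mixing makes the relevant limit self-joinings pairwise independent, and Host's joining theorem \cite{Host} upgrades pairwise independence to full independence for systems whose Pinsker algebra has singular spectrum, so these joinings are product measures and the correlations factor, collapsing into genuinely lower-order averages to which the induction hypothesis applies. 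The uniform control needed to pass from a dense subclass of $L^\infty(X)$ to all bounded functions is supplied by a maximal inequality, which one obtains through Calder\'on's transference principle in the same spirit as in Bourgain's argument.

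Combining these ingredients, for each mean-zero configuration the pointwise limit exists and coincides with the $L^2$ limit $0$, and resumming the $2^{k}$ terms of the decomposition recovers the limit $\prod_i\int f_i\,d\mu$ almost everywhere. I expect the main obstacle to be the inductive step, namely the decoupling of the self-joinings produced by the van der Corput reduction: it is precisely the singular spectrum of the Pinsker algebra, through Host's theorem, that forces these pairwise-independent joinings to be full products, and without this the reduction to lower order fails. A secondary difficulty is establishing the multilinear maximal inequality with constants independent of $N$, since Bourgain's bilinear maximal estimate does not transfer verbatim to higher orders.
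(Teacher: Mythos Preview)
The paper does not give a proof of Theorem~\ref{TMA}; it is quoted as a known result of Assani \cite{Assani}, with only the one-line remark that the proof ``is based essentially on Bourgain's theorem \cite{Bourgain-D} combined with Host's joining theorem \cite{Host}.'' There is thus no detailed argument in the paper to compare against; all that can be said is that your sketch invokes exactly the two ingredients the paper names---Bourgain's double recurrence theorem for the base case and Host's theorem to decouple the self-joinings arising in the van der Corput step---and is therefore consistent with the approach attributed to Assani.

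Your outline is essentially correct and is indeed how Assani's proof goes. One point, however: you list as a ``secondary difficulty'' the need for a multilinear maximal inequality to pass from a dense subclass to all of $L^\infty$. This is not needed here, and it is good that it is not, since (as you yourself note) no such inequality is available for $k\geq 3$. The van der Corput reduction is carried out pointwise, for a fixed $x$ in a full-measure set: one bounds $\limsup_N |\frac1N\sum_n\prod_i f_i(T^{in}x)|^2$ by $\limsup_H\frac1H\sum_{h=1}^{H}\limsup_N\frac1N\sum_n\prod_i f_i(T^{in}x)\overline{f_i(T^{i(n+h)}x)}$, and the inner $\limsup$ is handled directly by the induction hypothesis applied to the functions $g_i^{(h)}=(\overline{f_i}\circ T^{ih})\cdot f_i$ (after telescoping, one factor becomes constant in $n$). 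Host's theorem then identifies the remaining limit in $h$. Since everything takes place in $L^\infty$ and the estimates are uniform in the sup-norm, no separate density/maximal argument is required. So your sketch is right in spirit but overstates the analytic difficulty of the closing step.
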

Theorem \ref{TMA} has been extended in the sense of the multiple retrun times theorems in \cite{AssaniIHP}. Therein, the author proved that for $\mu$ a. e. $x$ the sequence
$\Big(\prod_{i=1}^{k}1_A(T^{b_in}x)\Big)_n$ is a good universal weight for the classical pointwise convergence.   
%Finally, for any $\rho>1$, we denote by $I_\rho$ the set $\Big\{\En{\rho^n}, n \in \N \Big\}.$
\section{Some tools on the maximal ergodic inequalities and Calder\'{o}n transference principle}
We say that the sequence of complex number $(a_n)$ is good weight in $L^p(X,\mu)$, $p\geq1$ for linear case, if, for any $f \in L^p(X,\mu)$, 
the ergodic averages
$$\frac{1}{N}\sum_{j=1}^{N}a_jf(T^jx)$$
converges a.e.. We further say that the maximal ergodic inequality holds in $L^p(X,\mu)$ for the linear case with weight $(a_n)$ if, 
for any $f \in L^p(X,\mu)$, the maximal function given by 
$$M(f)(x)=\sup_{N \geq 1}\Big|\frac{1}{N}\sum_{j=1}^{N}a_jf(T^jx)\Big|$$
satisfy the weak-type inequality 
$$\lambda \mu\Big\{x~~:~~M(f)(x)>\lambda \Big\} \leq C \big\|f\big\|_p,$$
for any $\lambda>0$ with $C$ is an absolutely constant.\\

It is well known that the classical maximal ergodic inequality is equivalent to the Birkhoff ergodic theorem \cite{Garcia}.\\

The previous notions can be extended in the usual manner to the multilinear case. Let $k \geq 2$, we thus say that $(a_n)$ 
is good weight in $L^{p_i}(X,\mu)$, $p_i\geq1$, $i=1,\cdots,k$, with 
$\sum_{i=1}^{k}\frac1{p_i}=1,$ if, for any $f_i \in L^{p_i}(X,\mu)$, $i=1,\cdots,k$,
the ergodic $k$-multilinear averages   
$$\frac1{N}\sum_{j=1}^{N}a_j\prod_{i=1}^{k}f_i(T_i^jx),$$
converges a.e.. The maximal multilinear ergodic inequality is said to hold in $L^{p_i}(X,\mu)$, $p_i\geq1$, $i=1,\cdots,k$, with 
$\sum_{i=1}^{k}\frac1{p_i}=1,$ if, for any $f_i \in L^{p_i}(X,\mu)$, $i=1,\cdots,k$, the maximal function given by 
$$M(f_1,\cdots,f_k)(x)=\sup_{N \geq 1}\Big|\frac{1}{N}\sum_{j=1}^{N}a_j\prod_{i=1}^{k}f_i(T_i^jx)\Big|$$
satisfy the weak-type inequality 
$$\lambda \mu\Big\{x~~:~~M(f)(x)>\lambda \Big\} \leq C \prod_{i=1}^{k}\big\|f_i\big\|_{p_i},$$
for any $\lambda>0$ with $C$ is an absolutely constant.\\

It is not known whether the classical maximal multilinear ergodic inequality ($a_n=1$, for each $n$) holds 
for the general case $n \geq 3$. Nevertheless, we have the following Calder\'{o}n transference principal in the homogeneous case. 
\begin{Prop}\label{CalderonP} Let $(a_n)$ be a sequence of complex number and assume that for any $\phi,\psi \in \ell^2(\Z)$, we have
$$\Big\|\sup_{ N  \geq 1}\Big|\frac1{N}\sum_{n=1}^{N}a_n \phi(j+n)\psi(j-n) 
\Big|\Big\|_{\ell^1(\Z)}< C.
\big\|\phi\big\|_{\ell^2(\Z)}\big\|\psi\big\|_{\ell^2(\Z)},$$
where $C$ is an absolutely constant. Then, for any dynamical system $(X,\A,T,\mu)$, for any $f,g \in L^2(X,\mu)$, we have 
%Then, 
%for any $\rho>1$, for any $\delta>0$, there exist $N_0 \in I_\rho $ independent of 
%$f$ and $g$ such that 
$$\Big\|\sup_{N \geq 1}\Big|\frac1{N}\sum_{n=1}^{N}a_n f(T^nx)g(T^{-n}x) \Big|\Big\|_1<
C \big\|f\big\|_{2}\big\|g\big\|_{2},$$ 
\end{Prop}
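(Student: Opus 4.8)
\emph{Plan.} The statement is a Calder\'{o}n transference from the integer shift to an arbitrary measure preserving system, so the plan is to manufacture, for each point $x\in X$, a pair of finitely supported sequences on $\Z$ whose discrete bilinear averages reproduce the dynamical averages along the orbit of $x$, feed them into the assumed $\ell^1(\Z)$ maximal inequality, and then integrate, letting the truncation parameters tend to infinity so that boundary losses disappear.

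First I would fix $x\in X$ and set $\phi(j)=f(T^jx)$ and $\psi(j)=g(T^jx)$. Then $\phi(j+n)\psi(j-n)=f\bigl(T^n(T^jx)\bigr)g\bigl(T^{-n}(T^jx)\bigr)$, so that
\[
\frac1N\sum_{n=1}^{N}a_n\,\phi(j+n)\psi(j-n)=\frac1N\sum_{n=1}^{N}a_n\,f\bigl(T^n(T^jx)\bigr)g\bigl(T^{-n}(T^jx)\bigr),
\]
which is precisely the ergodic bilinear average evaluated at the shifted point $T^jx$. Thus the integer offset $j$ plays the role of the base point.

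Next, to stay inside $\ell^2(\Z)$ I would truncate. Fix $L\ge1$ and write
\[
M_L(f,g)(y)=\sup_{1\le N\le L}\Bigl|\frac1N\sum_{n=1}^{N}a_n f(T^ny)g(T^{-n}y)\Bigr|.
\]
For a large integer $M$ define the finitely supported sequences $\Phi_x=\phi\cdot\mathbf{1}_{[1,M+L]}$ and $\Psi_x=\psi\cdot\mathbf{1}_{[-L,M-1]}$. For every $0\le j\le M$ and $1\le n\le L$ the indices $j+n$ and $j-n$ land inside the respective windows, so $\Phi_x,\Psi_x$ agree with $\phi,\psi$ on all the terms that matter and $M_L(f,g)(T^jx)=\sup_{1\le N\le L}\bigl|\frac1N\sum_{n=1}^{N}a_n\Phi_x(j+n)\Psi_x(j-n)\bigr|$. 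Applying the hypothesis to $\Phi_x,\Psi_x$ (the full supremum over $N\ge1$ only dominates the one over $N\le L$) and keeping only the offsets $0\le j\le M$ gives
\[
\sum_{j=0}^{M}M_L(f,g)(T^jx)\le C\,\|\Phi_x\|_{\ell^2(\Z)}\,\|\Psi_x\|_{\ell^2(\Z)}.
\]
Now I would integrate in $x$. By the $T$-invariance of $\mu$ the left side integrates to $(M+1)\|M_L(f,g)\|_{L^1(X)}$, while Cauchy--Schwarz together with $\int_X\|\Phi_x\|_{\ell^2(\Z)}^2\,d\mu=(M+L)\|f\|_2^2$ and $\int_X\|\Psi_x\|_{\ell^2(\Z)}^2\,d\mu=(M+L)\|g\|_2^2$ bounds the right side by $C(M+L)\|f\|_2\|g\|_2$. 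Hence
\[
\|M_L(f,g)\|_{L^1(X)}\le C\,\frac{M+L}{M+1}\,\|f\|_2\,\|g\|_2.
\]
Letting $M\to\infty$ with $L$ fixed kills the boundary factor, and then letting $L\to\infty$ via monotone convergence (the $M_L(f,g)$ increase to the full maximal function) yields the claim.

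The only delicate point, and the heart of the transference, is the bookkeeping of the truncation windows: the windows $[1,M+L]$ and $[-L,M-1]$ must be wide enough that no interior offset $0\le j\le M$ sees the truncation, yet narrow enough that their $\ell^2$ masses grow only like $M+L$ rather than faster. This is exactly what makes the ratio $(M+L)/(M+1)$ tend to $1$, so that the loss at the boundary is asymptotically negligible; the measure preservation of $T$ is used crucially to make $\int_X M_L(f,g)(T^jx)\,d\mu$ independent of $j$.
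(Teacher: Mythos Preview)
Your proof is correct and is essentially the same Calder\'on transference argument the paper has in mind; the paper does not prove Proposition~3.1 separately but carries out the same scheme in the proof of Theorem~4.5 (define $\phi_x(n)=f(T^nx)$, $\psi_x(n)=g(T^nx)$ on a finite window, apply the discrete inequality, sum over a block of shifts $j$, integrate, and use measure preservation together with Cauchy--Schwarz). The only cosmetic difference is that the paper uses symmetric windows $[-2\bar N,2\bar N]$ for both $\phi_x$ and $\psi_x$ and accepts the resulting constant $4C$, whereas your asymmetric windows $[1,M+L]$ and $[-L,M-1]$, followed by letting $M\to\infty$, recover the sharp constant $C$.
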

We further have
%In \cite{DemeterLT}, the authors  that the the classical maximal multilinear ergodic inequality holds for the 
%homogneous case provided $\frac{1}{p_{k+1}}<3/2$.\\
\begin{Prop}\label{CalderonP2} Let $(a_n)$ be a sequence of complex number and  assume that for any $\phi,\psi \in \ell^2(\Z)$, for any $\lambda>0$, for any integer $J \geq 2$, we have
$$\Big|\Big\{1 \leq j \leq J~~:~~ \sup_{ N  \geq 1}\Big|\frac1{N}\sum_{n=1}^{N}a_n \phi(j+n)\psi(j-n) 
\Big|> \lambda \Big\}\Big|<C \frac{\big\|\phi\big\|_{\ell^2(\Z)}\big\|\psi\big\|_{\ell^2(\Z)}}{\lambda},$$
where $C$ is an absolutely constant. Then, for any dynamical system $(X,\A,T,\mu)$, for any $f,g \in L^2(X,\mu)$, we have 
%Then, 
%for any $\rho>1$, for any $\delta>0$, there exist $N_0 \in I_\rho $ independent of 
%$f$ and $g$ such that 
$$\mu\Big\{x \in X~~:~~\sup_{N \geq 1}\Big|\frac1{N}\sum_{n=1}^{N}a_n f(T^nx)g(T^{-n}x) \Big| > \lambda \Big\}<
C \frac{\big\|f\big\|_{2}.\big\|g\big\|_{2}}{\lambda}.$$ 
\end{Prop}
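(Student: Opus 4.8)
The plan is to run the standard Calder\'{o}n transference argument, adapted to the bilinear shift structure $\phi(j+n)\psi(j-n)$ and using the weak-type hypothesis in place of a strong-type one. First I would fix a dynamical system $(X,\A,\mu,T)$ and $f,g\in L^2(X,\mu)$, and reduce to the truncated maximal operator
$$M_{N_0}(f,g)(x)=\sup_{1\le N\le N_0}\Big|\frac1N\sum_{n=1}^N a_n f(T^nx)g(T^{-n}x)\Big|.$$
It suffices to bound this uniformly in $N_0$ and then let $N_0\to\infty$ by monotone convergence, since the level sets $\{M_{N_0}(f,g)>\lambda\}$ increase to $\{\sup_{N\ge1}|\cdots|>\lambda\}$.

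Second, for a fixed truncation length $L\gg N_0$ and each $x\in X$, I would introduce the finitely supported sequences $\phi_x(m)=f(T^mx)\ind{[0,L]}(m)$ and $\psi_x(m)=g(T^mx)\ind{[0,L]}(m)$, which lie in $\ell^2(\Z)$. The key algebraic identity is that, writing $y=T^jx$, one has $\phi_x(j+n)\psi_x(j-n)=f(T^{j+n}x)g(T^{j-n}x)=f(T^n y)g(T^{-n}y)$ whenever both $j+n$ and $j-n$ lie in $[0,L]$. Consequently, for every ``core'' index $j$ with $N_0\le j\le L-N_0$ one has $j\pm n\in[0,L]$ for all $1\le n\le N_0$, so that
$$M_{N_0}(f,g)(T^jx)\le \sup_{N\ge1}\Big|\frac1N\sum_{n=1}^N a_n\phi_x(j+n)\psi_x(j-n)\Big|.$$

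Third, I would apply the weak-type hypothesis to $\phi_x,\psi_x$ (with $J=L$), restrict the resulting count to the core indices, and integrate over $x\in X$. On the left, the measure-preservation of $T$ gives
$$\int_X\sum_{j=N_0}^{L-N_0}\1\{M_{N_0}(f,g)(T^jx)>\lambda\}\,d\mu=(L-2N_0+1)\,\mu\{M_{N_0}(f,g)>\lambda\}.$$
On the right, Cauchy--Schwarz together with $\int_X\|\phi_x\|_{\ell^2}^2\,d\mu=(L+1)\|f\|_2^2$ and the analogous identity for $\psi_x$ yields $\int_X\|\phi_x\|_{\ell^2}\|\psi_x\|_{\ell^2}\,d\mu\le(L+1)\|f\|_2\|g\|_2$. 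Dividing through by $(L-2N_0+1)$ and letting $L\to\infty$ with $N_0$ fixed sends the ratio $(L+1)/(L-2N_0+1)\to1$, giving the weak-type bound for $M_{N_0}$ with the same absolute constant $C$; letting $N_0\to\infty$ then finishes the proof.

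The delicate point is controlling the boundary. I expect the only real subtlety to be the bookkeeping of the ``edge'' indices $j<N_0$ or $j>L-N_0$, where the truncation of $\phi_x,\psi_x$ destroys the exact correspondence with the dynamical averages. The whole argument hinges on choosing $L$ much larger than $N_0$ so that these edge indices form a vanishing proportion as $L\to\infty$, which is precisely what forces the two separate passages to the limit (first $L\to\infty$, then $N_0\to\infty$) rather than a single one. The same scheme, with the $L^1$-norm of the maximal function replacing the weak-type count, proves Proposition~\ref{CalderonP}.
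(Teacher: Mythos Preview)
Your argument is correct and is precisely the standard Calder\'{o}n transference scheme that the paper invokes; the paper does not write out a separate proof of Proposition~\ref{CalderonP2} but points to \cite[Proposition~14.1]{DemeterLT} and carries out the analogous computation in the proof of Theorem~\ref{MaximalI} (with $\phi_x(n)=f(T^nx)\1_{[-2\bar N,2\bar N]}(n)$ and summation over $|j|\le\bar N$). The only noteworthy difference is that the paper fixes the window length proportional to the maximal scale $\bar N$, which produces a harmless absolute multiplier on $C$ (the ``$4C$'' in Theorem~\ref{MaximalI}), whereas your two-step limit $L\to\infty$ followed by $N_0\to\infty$ recovers the constant $C$ exactly; otherwise the bookkeeping, the use of measure preservation on the left, and Cauchy--Schwarz on $\int_X\|\phi_x\|_{\ell^2}\|\psi_x\|_{\ell^2}\,d\mu$ on the right are identical in spirit.
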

It is easy to check that Proposition \ref{CalderonP} and \ref{CalderonP2} hold for the homogeneous $k$-multilinear ergodic averages, for any 
$k \geq 3$. Moreover, one may state and prove the finitary version where $\Z$ is replaced by $\Z/\bar{J}\Z$ and the functions $\phi$ and 
$\psi$ with $\bar{J}$-periodic functions. We refer to Proposition 14.1 in \cite{DemeterLT} for more details.\\

In finitary setting, for $\bar{J} \geq 1, p \geq 1$ and $f$ a $\bar{J}$-periodic function , we put 
$$\Z_{\bar{J}}= \Z/\bar{J}\Z,$$
$$\E_{\Z_{\bar{J}}}(f)=\frac{1}{\bar{J}}\sum_{j=1}^{\bar{J}}f(j),$$
$$\big\|f\big\|_{\ell^p(\Z_{\bar{J}})}=\Big(\frac{1}{\bar{J}}\sum_{j=1}^{\bar{J}}|f(j)|^p\Big)^{\frac1{p}}.$$
Moreover, as customary , we will denote by $\1$ the function from $\Z$ to $\C$ defined by $x \mapsto 1$, and by $\1_{\bar{J}}$ the indicator function of the interval $[-2\bar{J},2\bar{J}]$.   We will denote also by $\1_{\bar{J}}$ the $\bar{J}$-periodic function defined by $x \in \Z_{\bar{J}} \mapsto 1.$
\section{Main results and its proof}
We start by stating our first main result.

\begin{thm}\label{Mainofmain}Let $(X,\ca,\mu,T)$ be an ergodic dynamical system, and $T_1,T_2$ be powers of $T$. Then, for any $f,g \in L^2(X)$, for almost all $x \in X$,
$$\frac1{N}\sum_{n=1}^{N}\bmnu(n) f(T_1^nx)g(T_2^nx) \tend{N}{+\infty}0,$$
where $\bmnu$ is the Liouville function or the M\"{o}bius function.
\end{thm}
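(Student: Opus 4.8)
The plan is to follow Bourgain's strategy as adapted through the Calder\'{o}n transference principle of Proposition \ref{CalderonP2}, so that the almost-everywhere convergence on an arbitrary system $(X,\ca,\mu,T)$ is reduced to a weak-type maximal inequality for the bilinear shift operator on $\ell^2(\Z)$. Writing $T_1=T^a$, $T_2=T^b$, the model averages to control are
\begin{equation}\label{plan-model}
\frac1N\sum_{n=1}^N\bmnu(n)\,\phi(j+an)\psi(j+bn),
\end{equation}
and the first step is to establish the weak-$(2,2)$ bound for the associated maximal function on $\Z$, because Proposition \ref{CalderonP2} then transfers it directly to the dynamical maximal inequality. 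The presence of the weight $\bmnu$ is what makes the \emph{limit} equal to zero rather than merely convergent: once the maximal inequality furnishes an a.e. finite limit superior and we verify that the limit is $0$ on a dense class of $(f,g)$, density together with the maximal inequality forces the limit to be $0$ for all $f,g\in L^2$.

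The technical heart is the maximal inequality for \eqref{plan-model}, and this is where I expect the main obstacle to lie. First I would pass to the finitary $\Z/\bar J\Z$ setting mentioned after Proposition \ref{CalderonP2}, so that $\phi,\psi$ become $\bar J$-periodic and Fourier analysis on a finite group is available. Then I would expand the average on the Fourier side, writing $\phi$ and $\psi$ through their Fourier coefficients so that each frequency pair $(\alpha,\beta)$ contributes an exponential sum of the form $\sum_{n\le N}\bmnu(n)\,e^{2\pi i(a\alpha+b\beta)n}$. The decisive input is the Davenport--Hua estimate \eqref{vin}, which with $k=1$ gives
\begin{equation}\label{plan-DH}
\max_{z\in\T}\Bigl|\sum_{n\le N}z^{\,n}\,\bml(n)\Bigr|\le C_A\,\frac{N}{\log^A N},
\end{equation}
uniformly in the frequency. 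This uniform cancellation is precisely the feature that is absent in Bourgain's unweighted theorem and that should let me dominate the oscillatory contributions by a logarithmically decaying factor, at the cost of following Bourgain's partition of frequencies into major and minor arcs.

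Concretely, I would split the frequency circle into the arcs near rationals with small denominator (the major arcs) and the complementary minor arcs, treating them as Bourgain does. On the minor arcs the individual exponential sums are $\ell^\infty$-small, and combined with \eqref{plan-DH} and an $\ell^2$ (Plancherel) estimate for the coefficients of $\phi$ and $\psi$ this should give a bound with the required $\|\phi\|_{\ell^2}\|\psi\|_{\ell^2}$ dependence and an extra $\log^{-A}N$ gain; the major-arc contribution is handled by approximating the weighted average by its unweighted analogue, where Bourgain's Theorem \ref{TDB} (or rather the maximal inequality underlying it) applies. The main difficulty will be to make the maximal supremum over $N$ compatible with \eqref{plan-DH}, which is only a bound for each fixed $N$: this requires the standard device of controlling the oscillation between dyadic scales, for instance by summing the estimates over $N=2^k$ and bounding the fluctuation within a dyadic block, so that the $\log^{-A}N$ decay survives the supremum. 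Once this maximal inequality is in hand, Proposition \ref{CalderonP2} delivers the dynamical weak-type bound, and the density argument described above completes the proof that the limit is almost everywhere zero.
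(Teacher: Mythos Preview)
Your overall architecture---Calder\'on transference to $\Z$, finitary Fourier analysis on $\Z/\bar J\Z$, the Davenport--Hua bound \eqref{vin}, control of the supremum via lacunary scales, and a final density argument from $L^\infty$ to $L^2$---matches the paper's. The substantive divergence is your major/minor-arc decomposition, which the paper does not use and which is in fact the point of the argument to avoid. Because \eqref{vin} is \emph{uniform} in $z\in\T$, the paper applies a single Cauchy--Schwarz to separate $f$, then the spectral theorem on $g$, obtaining
\[
\Big\|\tfrac1N\sum_{n=1}^N\bmnu(n)\,g(S^{-n}\cdot)\,\chi(n)\Big\|_{\ell^2(\Z_{\bar J})}\le\frac{C_A}{(\log N)^A}\,\|g\|_{\ell^2(\Z_{\bar J})}
\]
with no frequency splitting whatsoever. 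Summing this decay over the lacunary grid $I_\rho$ gives the variational bound $\sum_{k\le K}\|m_{N_k,N_{k+1}}(f,g)\|_{\ell^1}\le C\sqrt K\,\|f\|_{\ell^2}\|g\|_{\ell^2}$ (Theorem~\ref{CalderonI}), which after transference (Theorem~\ref{MaximalI}) is fed into the Thouvenot--Demeter argument rather than a weak-type inequality of the form in Proposition~\ref{CalderonP2}. Your route would instead invoke Bourgain's maximal inequality on the major arcs; besides being unnecessary, this makes the proof depend on Theorem~\ref{TDB} rather than recover it as a corollary, which is one of the paper's stated goals. Note also that your major-arc step ``approximate the weighted average by its unweighted analogue'' is not correct as written: at a rational frequency the M\"obius-weighted sum is still $O(N/\log^A N)$ by \eqref{vin} and is not close to the unweighted sum, so there is nothing to approximate---which is exactly why the arc decomposition is superfluous here.
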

Our second main result can be stated as follows:
\begin{thm}\label{MainII}Let $(X,\ca,\mu,T)$ be weakly mixing ergodic dynamical system, and  assume that the spectrum of the restriction of $T$ to its Pinsker algebra is singular. Then, for any
$f_j \in L^{\infty}(X),$ $j=1,\cdots,k$, for almost all $x \in X$, we have
$$\frac1{N}\sum_{n=1}^{N}\bmnu(n) \prod_{j=1}^{k}f_j(T^{jn}x)\tend{N}{+\infty}0,$$
where $\bmnu$ is the Liouville function or the M\"{o}bius function.
\end{thm}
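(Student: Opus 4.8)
The plan is to derive the statement from Assani's singular multilinear recurrence theorem (Theorem~\ref{TMA}) together with the K\'atai--Bourgain--Sarnak--Ziegler orthogonality criterion. First I would normalize so that $\|f_j\|_{\infty}\le 1$ and split each function into its mean and its mean-zero part, writing $f_j=c_j+\tilde f_j$ with $c_j=\int f_j\,d\mu$ and $\int\tilde f_j\,d\mu=0$. Expanding $\prod_{j=1}^{k}f_j(T^{jn}x)$ by multilinearity produces $2^{k}$ terms indexed by subsets $S\subseteq\{1,\dots,k\}$, each of the form $\big(\prod_{j\notin S}c_j\big)\,a_n^{S}(x)$ with $a_n^{S}(x)=\prod_{j\in S}\tilde f_j(T^{jn}x)$. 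The term $S=\emptyset$ contributes the constant $\prod_j c_j$ times $\frac1N\sum_{n=1}^{N}\bmnu(n)$, which tends to $0$ by the Prime Number Theorem \eqref{E:la}. Hence it suffices to show that for each fixed nonempty $S$ and almost every $x$ one has $\frac1N\sum_{n=1}^{N}\bmnu(n)\,a_n^{S}(x)\tend{N}{+\infty}0$.

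For this I would invoke the orthogonality criterion of K\'atai and of Bourgain--Sarnak--Ziegler: if $(a_n)$ is bounded by $1$ and $\frac1N\sum_{n\le N}a_{pn}\overline{a_{qn}}\to 0$ for all sufficiently large distinct primes $p,q$, then $\frac1N\sum_{n\le N}\bmnu(n)a_n\to 0$ (the criterion applies equally to $\bmu$ and to $\bml$, which are bounded and multiplicative). With $a_n=a_n^{S}(x)$ the relevant self-correlation is
\[
\frac1N\sum_{n\le N}\prod_{j\in S}\tilde f_j(T^{jpn}x)\,\overline{\prod_{j\in S}\tilde f_j(T^{jqn}x)},
\]
which is itself a homogeneous multilinear ergodic average in the powers $\{T^{jp},T^{jq}:j\in S\}$ of $T$. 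Restricting to primes $p,q>k$ guarantees that the exponents $\{jp,jq:j\in S\}$ are pairwise distinct, so—after padding the missing exponents with the constant function $1$ to match the consecutive-power formulation—Theorem~\ref{TMA} applies and shows that for almost every $x$ this average converges to $\prod_{j\in S}\big|\int\tilde f_j\,d\mu\big|^{2}=0$.

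It then remains to organize the quantifiers. For each fixed pair of distinct primes $p,q>k$ the convergence above holds off a $\mu$-null set; intersecting over the countably many such pairs yields a single full-measure set on which the correlation hypothesis of the criterion holds for all large distinct primes simultaneously. On that set the criterion gives $\frac1N\sum_{n=1}^{N}\bmnu(n)\,a_n^{S}(x)\to0$, and summing the finitely many nonempty subsets $S$ together with the $S=\emptyset$ term finishes the argument. The main obstacle I anticipate is precisely that the self-correlation must tend to $0$ rather than to a nonzero constant: this is exactly why the mean-zero splitting is indispensable, since without it Theorem~\ref{TMA} would only produce the constant $\prod_j|\int f_j\,d\mu|^2$. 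A secondary point requiring care is the possible collision of small exponents $jp=j'q$, which is what forces the restriction to primes exceeding $k$ and the use of the padding device so as to stay within the exact consecutive-power form of Assani's statement.
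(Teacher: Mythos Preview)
Your proposal is correct and follows the same strategy as the paper: reduce to mean-zero factors, then use Assani's Theorem~\ref{TMA} to verify the correlation hypothesis of the KBSZ criterion (Theorem~\ref{KBSZ}). The only cosmetic difference is organizational---the paper carries out the mean-zero reduction by induction on $k$ (subtracting the mean from a single $f_j$ and absorbing the constant term into the case $k-1$, with Sarnak's linear result as the base), whereas you expand the full product into $2^k$ terms at once; your version is arguably tidier and is also more explicit about avoiding exponent collisions via the restriction $p,q>k$ and the padding device, points the paper leaves implicit.
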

For the proof of Theorem \ref{MainII}, we need the following criterion based on the results of Bourgain-Sarnak-Ziegler \cite[Theorem 2]{KBSZ}, and Katai \cite{Katai} , which in turn 
develop some ideas of Daboussi (presented in \cite{Daboussi}, \cite{DaboussiII}). Let us notice that similar results are presented by Harper in \cite{Harper} and Ramar\'e in \cite{Ramare}.

\begin{thm}[Katai-Bourgain-Sarnak-Ziegler's (KBSZ) criterion \cite{KBSZ}, \cite{Katai}]\label{KBSZ}  Let $f$ be an arithmetic bounded function and let $\bmnu$  be a bounded multiplicative function. Assume that for all sufficiently large distinct primes $p,q$ we have
$$\frac1{N}\sum_{n=1}^{N} f(np) \overline{f(nq)} \tend{N}{+\infty}0.$$
Then
$$\frac1{N}\sum_{n=1}^{N} \bmnu(n)f(n)  \tend{N}{+\infty}0.$$
\end{thm}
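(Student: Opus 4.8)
The statement is the Kátai--Bourgain--Sarnak--Ziegler orthogonality criterion, and the natural approach is an $L^2$ almost-orthogonality estimate for the prime dilates of $f$. Write $S_N=\sum_{n\le N}\bmnu(n)f(n)$, fix a threshold $w$ (to be sent to infinity only at the very end), let $\mathcal P=\{p\le w\}$ be the primes up to $w$, and set $\mathcal L=\sum_{p\in\mathcal P}1/p=\log\log w+O(1)$. The whole idea is to use the multiplicativity of $\bmnu$ to move the weight $\bmnu(n)$ off $n$ and onto the dilates $f(pn)$, turning the linear average into a bilinear average over pairs of primes that the hypothesis is designed to annihilate.

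First I would record the prime-detecting identity
$$\sum_{p\in\mathcal P}\bmnu(p)\sum_{n\le N/p}\bmnu(n)f(pn)=\sum_{m\le N}\omega_{\mathcal P}(m)\,\bmnu(m)f(m)+O\Bigl(N\sum_{p\in\mathcal P}p^{-2}\Bigr),$$
where $\omega_{\mathcal P}(m)=\#\{p\in\mathcal P:p\mid m\}$ and the error term absorbs both the pairs with $p\mid n$ (where $\bmnu(pn)\ne\bmnu(p)\bmnu(n)$) and the discrepancy between $\omega_{\mathcal P}$ and the count of primes dividing $m$ exactly once; it is $O(N)$ and hence harmless after normalisation. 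I would then replace $\omega_{\mathcal P}(m)$ by its mean $\mathcal L$: the discrepancy $\sum_{m\le N}(\omega_{\mathcal P}(m)-\mathcal L)\bmnu(m)f(m)$ is, by Cauchy--Schwarz together with the Turán--Kubilius inequality $\sum_{m\le N}(\omega_{\mathcal P}(m)-\mathcal L)^2\ll N\mathcal L$, of size $O(N\mathcal L^{1/2})$. This delivers the basic relation
$$\mathcal L\,S_N=\sum_{p\in\mathcal P}\bmnu(p)\sum_{n\le N/p}\bmnu(n)f(pn)+O(N\mathcal L^{1/2}).$$

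Next I would bound the right-hand main term $M:=\sum_{p\in\mathcal P}\bmnu(p)\sum_{n\le N/p}\bmnu(n)f(pn)$ by Cauchy--Schwarz in $n$, unifying the unequal summation ranges $n\le N/p$ by extending each $f(pn)$ to be zero when $pn>N$. Expanding the square produces a diagonal contribution $\sum_{p\in\mathcal P}\sum_{n\le N/p}|f(pn)|^2=O(N\mathcal L)$ and an off-diagonal contribution $\sum_{p\ne q}\bigl|\sum_{n\le N/\max(p,q)}f(pn)\overline{f(qn)}\bigr|$. For each fixed pair of distinct primes the inner sum is $o(N)$ by hypothesis (the finitely many pairs involving a prime below the threshold of the hypothesis contribute only $O(N\mathcal L)$ and are absorbed into the diagonal), so for fixed $\mathcal P$ the off-diagonal term is $o_N(N)$ as $N\to\infty$. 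Combining, $|M|\le N^{1/2}\bigl(O(N\mathcal L)+o_N(N)\bigr)^{1/2}$, whence dividing the basic relation by $\mathcal L N$ and letting $N\to\infty$ gives $\limsup_{N\to\infty}|S_N|/N\ll\mathcal L^{-1/2}$. Letting $w\to\infty$ sends $\mathcal L\to\infty$ and forces the limit to be $0$, which is the claim.

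The step I expect to be the main obstacle is the correct handling of the two nested limits and the error terms: one must freeze $\mathcal P$ first in order to invoke the pairwise hypothesis (which is only asymptotic in $N$) on a finite collection of pairs, send $N\to\infty$, and only then let $w\to\infty$; the nonuniform ranges $n\le N/p$, the exceptional set $p\mid n$, and the threshold coming from ``sufficiently large primes'' all have to be shown to cost at most $O(N\mathcal L)$ so as not to swamp the gain $\mathcal L^{-1/2}$. I would invoke the Turán--Kubilius inequality for the variance bound and follow the arrangement of \cite{KBSZ} and \cite{Katai}.
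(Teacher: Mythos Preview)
Your argument is the standard Daboussi--K\'atai/Bourgain--Sarnak--Ziegler proof and is essentially correct: the prime-detecting identity, the Tur\'an--Kubilius replacement of $\omega_{\mathcal P}$ by its mean, the Cauchy--Schwarz step producing the bilinear sum over prime pairs, and the two-step limit ($N\to\infty$ first with $w$ fixed, then $w\to\infty$) are all arranged properly, and your bookkeeping of the exceptional sets (pairs with $p\mid n$, small primes below the hypothesis threshold, unequal ranges $n\le N/p$) is adequate since each costs at most $O(N\mathcal L)$ inside the square.

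However, there is nothing to compare to: the paper does \emph{not} prove Theorem~\ref{KBSZ}. It is stated as a quoted result from \cite{KBSZ} and \cite{Katai} (see also \cite{Daboussi}, \cite{Harper}, \cite{Ramare}) and is used as a black box in the proof of Theorem~\ref{MainII}. What you have written is, in outline, exactly the proof given in those references, so in that sense your proposal matches the ``paper's proof'' by way of the cited sources.
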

\begin{proof}[\textbf{Proof of Theorem \ref{MainII}}.] The proof goes by induction on $k$. We further assume that for some $i \in
\big\{1,\cdots,k\big\}$, $\ds \int f_i d\mu(x)=0$. The case $k=1$ follows from Sarnak's result \cite{Sarnak}. For $k=2$, put
$$F(n)=f_1(T_1^nx)f_2(T_2^nx),$$
where $T_1,T_2$ are the powers of $T$. Then, by Theorem \ref{TMA}, for almost all $x \in X$, for all $p \neq q$,
$$\frac1{N}\sum_{n=1}^{N} F(np) \overline{F}(nq) \tend{N}{+\infty}0,$$
Therefore, by KBSZ criterion (Theorem \ref{KBSZ}), we get, for almost all $x \in X$,
$$\frac1{N}\sum_{n=1}^{N} \bmnu(n)F(n)  \tend{N}{+\infty}0,$$
that is, for almost all $x \in X$,
$$\frac1{N}\sum_{n=1}^{N} \bmnu(n) f_1(T_1^nx)f_2(T_2^nx) \tend{N}{+\infty}0.$$
But, for any $f_1,f_2 \in L^\infty(X)$,
\begin{eqnarray*}
&&\frac1{N}\sum_{n=1}^{N} \bmnu(n) f_1(T_1^nx)f_2(T_2^nx)\\
&=& \frac1{N}\sum_{n=1}^{N} \bmnu(n) \Big(f_1-\int f_1 d\mu\Big)(T_1^nx)f_2(T_2^nx)+ 
\Big(\int f_1 d\mu(x)\Big)\frac1{N}\sum_{n=1}^{N} \bmnu(n) f_2(T_2^nx).
\end{eqnarray*}
Consequently, for any $f_1,f_2  \in L^\infty(X)$, for almost all $x \in X$,
$$\frac1{N}\sum_{n=1}^{N} \bmnu(n) f_1(T_1^nx)f_2(T_2^nx) \tend{N}{+\infty}0.$$
Now, assume that for almost all $x \in X$, and for any $\ell \leq k$, we have
$$\frac1{N}\sum_{n=1}^{N} \bmnu(n) \prod_{i=1}^{\ell}f_i(T_i^nx) \tend{N}{+\infty}0,$$
where $T_i, i=1,\cdots \ell$ are the powers of $T$. Then, by
applying again Theorem \ref{TMA}, we see that for almost all $x$,
$$\frac1{N}\sum_{n=1}^{N} F(np) \overline{F}(nq) \tend{N}{+\infty}0,$$
where
$$F(n)=\prod_{i=1}^{k+1}f_i(T_i^nx).$$
Hence, once again by  KBSZ criterion (Theorem \ref{KBSZ}), it follows that for almost all $x \in X$,
$$\frac1{N}\sum_{n=1}^{N} \bmnu(n)F(n)  \tend{N}{+\infty}0,$$
whence, for almost all $x \in X$,
$$\frac1{N}\sum_{n=1}^{N} \bmnu(n) \prod_{i=1}^{k+1}f_i(T_i^nx) \tend{N}{+\infty}0.$$
The proof of the theorem is complete.
\end{proof}
\begin{rem} Of course, our proof yields that the convergence a.e. holds for any bounded multiplicative function $\bmnu$. We deduce also that Theorem \ref{MainII} is valid for  the class of weakly mixing PID or the distal flows  with the help of the recent result of Gutman-Huang-Shao-Ye \cite{Ye1} and Huang-Shao-Ye \cite{Ye2}. 
Obviously, if the answer to Furstenberg's question \cite{Fbook} is positive then Theorem \ref{MainII} holds for the general case. Indeed, by the decomposition theorem \footnote{see for instance \cite[Proposition 3.1]{CHN}.}, for any $k \in \N$, for every $\varepsilon>0$, there  exist measurable functions  $f_{ns} ,f_z ,f_e $, such that
\begin{enumerate}[label=(\alph*)]
	\item $\|f_{\kappa}\|_{\infty} \leq 2 \|f\|_{\infty}$ with $\kappa \in \{ns,z,e\}$.
	\item $f = f_{ns} + f_z + f_e$ with  $|\|f_z\||_{k+1} = 0;~~ \|f_e\|_1 <\varepsilon;$ and
	\item for $\mu$ almost every $x \in X$, the sequence $(f_{ns}(T^nx))_{n \in \N}$ is a $k$-step nilsequence,
\end{enumerate}
where $|\|.\||_{k+1}$ is the Gowers norms. For the definition and more details on the Gowers norms, we refer to \cite{Tao-Vu}. Therefore,  by Green-Tao theorem \cite[Theorem 1.1]{GreenT},
the limit for $f_{ns}$ is zero. Now, as before, by applying DKBSZ theorem to the part $f_z$, we see that the limit is zero. Finaly, we get that the limit for $f_e$ is less than $\varepsilon.$ Whence, the limit for $f$ is less than $\varepsilon,$ since $\varepsilon>$ was arbitrary, we conclude that the limit is zero. 
\end{rem}
We move now to prove Theorem \ref{Mainofmain}. For any $\rho>1$, we will denote by $I_\rho$ the set $\Big\{\En{\rho^n}, n \in \N \Big\}.$ The maximal functions are defined by \\
$$M_{N_0,\bar{N}}(f,g)(x)=\sup_{\overset{ N_0 \leq N \leq \bar{N}}{N \in I_\rho}}\Big|\frac1{N}\sum_{n=1}^{N}\bmnu(n) f(T^{n}x) g(T^{-n}x)
-\frac1{N_0}\sum_{n=1}^{N_0}\bmnu(n) f(T^{n}x) g(T^{-n}x)\Big|,$$
$$M_{N_0}(f,g)(x)=\sup_{\overset{N \geq N_0}{N \in I_\rho}}\Big|\frac1{N}\sum_{n=1}^{N}\bmnu(n) f(T^{n}x) g(T^{-n}x)-
\frac1{N_0}\sum_{n=1}^{N_0}\bmnu(n) f(T^{n}x) g(T^{-n}x)\Big|.$$
Obviously, 
$$\lim_{\bar{N} \longrightarrow +\infty}M_{N_0,\bar{N}}(f,g)(x)=M_{N_0}(f,g)(x).$$
For the shift $\Z$-action, the maximal functions are denoted by $m_{N_0,\bar{N}}(\phi,\psi)$ and $m_{N_0}(\phi,\psi)$.\\

We start by proving the following:
\begin{thm}\label{CalderonI}For any $\rho>1$, %for any $\delta>0$, 
for any $f,g \in \ell^2(\Z)$, 
%there exist $N_0 \in I_\rho $ independent of 
%$f$ and $g$ 
for any $K \geq 1$, we have
%such that
\begin{eqnarray}\label{BourgainMaximal1} 
\sum_{k=1}^{K}
\Big\|m_{N_k,N_{k+1}}(f,g)\Big\|_{\ell^1(\Z)} < C. \sqrt{K}.
\big\|f\big\|_{\ell^2(\Z)}\big\|g\big\|_{\ell^2(\Z)},
\end{eqnarray}
where $\bmnu$ is the Liouville function or the M\"{o}bius function and $C$ is an absolutely constant which depend only on $\rho$.
\end{thm}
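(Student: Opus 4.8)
The plan is to follow Bourgain's scheme closely, transferring the estimate to the integer shift, extracting the factor $\sqrt{K}$ by an orthogonality (square-function) step, and reducing the genuine analytic content to a single $K$-independent bilinear $\ell^1$ bound that is driven by the Davenport--Hua estimate \eqref{vin}. First I would pass to the finitary model: by the periodic version of the Calder\'on transference principle (Proposition 14.1 in \cite{DemeterLT}) it suffices to prove \eqref{BourgainMaximal1} for $\bar J$-periodic $f,g$ on $\Z/\bar J\Z$, uniformly in $\bar J$, with the normalized norms $\|\cdot\|_{\ell^p}$; the full $\ell^2(\Z)$ statement then follows for finitely supported $f,g$ (dense in $\ell^2(\Z)$) by periodizing and letting $\bar J\to+\infty$. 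This lets me use the discrete Fourier transform on $\Z/\bar J\Z$ with clean Plancherel identities. Write $N_k=\lfloor\rho^k\rfloor$ for the $k$-th element of $I_\rho$, put
\[
A_N(f,g)(j)=\frac1N\sum_{n=1}^{N}\bmnu(n)f(j+n)g(j-n),\qquad D_k:=A_{N_{k+1}}(f,g)-A_{N_k}(f,g),
\]
and note that between $N_k$ and $N_{k+1}$ the set $I_\rho$ has no intermediate point, so $m_{N_k,N_{k+1}}(f,g)=|D_k|$.

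The factor $\sqrt{K}$ comes out immediately by Cauchy--Schwarz in $k$, pointwise in $j$:
\[
\sum_{k=1}^{K}\big\|D_k\big\|_{\ell^1}=\sum_{j}\sum_{k=1}^{K}|D_k(j)|\le \sqrt{K}\,\Big\|\Big(\sum_{k=1}^{K}|D_k|^2\Big)^{1/2}\Big\|_{\ell^1}.
\]
Thus it is enough to establish the $K$-independent square-function bound $\big\|(\sum_{k}|D_k|^2)^{1/2}\big\|_{\ell^1}\le C(\rho)\|f\|_{\ell^2}\|g\|_{\ell^2}$. Expanding $f$ and $g$ in Fourier series on $\Z/\bar J\Z$ exhibits $D_k$ as a bilinear multiplier operator whose symbol depends only on the frequency difference,
\[
D_k(j)=\sum_{a,b}\hat f(a)\hat g(b)\,\widetilde K_k\!\Big(\tfrac{a-b}{\bar J}\Big)\,e^{2\pi i(a+b)j/\bar J},\qquad \widetilde K_k(\theta)=P_{N_{k+1}}(\theta)-P_{N_k}(\theta),
\]
where $P_N(\theta)=\frac1N\sum_{n\le N}\bmnu(n)e^{2\pi in\theta}$. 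The Davenport--Hua estimate \eqref{vin} (the case $k=1$) gives $\sup_{\theta}|P_N(\theta)|\le C_A/\log^{A}N$; since $N_k\asymp\rho^k$, this yields the uniform square-function control $\sum_{k}|\widetilde K_k(\theta)|^2\le C(\rho,A)$ for every $\theta$, as soon as $A>\tfrac12$. This uniform smallness of the weighted exponential sums is the decisive gain over the unweighted averages.

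The remaining and hardest step is the bilinear $\ell^1$ square-function inequality itself, and its difficulty is intrinsic to the bilinear structure: because the symbol is a function of $\alpha-\beta$ only, a bare application of Plancherel produces the convolution $|\hat f|\ast|\hat g|$, which is \emph{not} dominated by $\|f\|_{\ell^2}\|g\|_{\ell^2}$, so the uniform bound on $\widetilde K_k$ by itself does not close the estimate. I would resolve this exactly as in Bourgain \cite{Bourgain-D}, running the circle-method decomposition of the symbol $\widetilde K_k(\alpha-\beta)$ together with the associated $TT^{\ast}$ and maximal arguments. The essential simplification afforded by \eqref{vin} is that the arcs on which the unweighted kernel concentrates now also carry small mass, so every arc is handled by the minor-arc bound and the resulting constant is the $K$-independent $C(\rho)$ obtained above. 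Combining this square-function estimate with the Cauchy--Schwarz step produces \eqref{BourgainMaximal1}.

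I expect this bilinear circle-method estimate to be the main obstacle: the transference to $\Z/\bar J\Z$, the extraction of $\sqrt{K}$, and the reduction to a uniform multiplier bound are all routine, whereas overcoming the $|\hat f|\ast|\hat g|$ obstruction genuinely requires Bourgain's finer analysis of the bilinear symbol rather than mere control of its supremum. The role of Davenport--Hua is precisely to make that analysis uniform across all arcs, which is what allows the constant to remain independent of $K$.
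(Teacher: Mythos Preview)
Your plan works in principle but is far more elaborate than what the paper does, and it misses the elementary shortcut that makes the M\"obius-weighted case genuinely easier than Bourgain's unweighted theorem. You correctly identify the $|\hat f|\ast|\hat g|$ obstruction that arises when one writes $A_N$ as a bilinear multiplier in the frequency difference and tries to bound the symbol in $L^\infty$, and you then propose to overcome it by replaying Bourgain's circle-method decomposition. The paper avoids this obstruction altogether by a different, much simpler linearization.

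Instead of Fourier-expanding \emph{both} $f$ and $g$, the paper writes $A_N(f,g)(j)$ as the value at $2j$ of the convolution $f\ast G_j$, where $G_j=\frac{1}{N}\bmnu_j\,g\,\1_{[j-N,j[}$. A single Cauchy--Schwarz in the Fourier variable then gives
\[
|A_N(f,g)(j)|^2\le \|f\|_{\ell^2}^2\int_{\widehat{\Z_{\bar J}}}\Big|\frac1N\sum_{n=1}^N\bmnu(n)\chi(n)\,g(j-n)\Big|^2\,d\chi,
\]
so after averaging in $j$ one is left with a \emph{linear} average in $g$ with weight $\bmnu(n)\chi(n)$. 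By the spectral theorem this equals $\big\|\frac1N\sum_{n\le N}\bmnu(n)\chi(n)\lambda^n\big\|_{L^2(\sigma_g)}$, and Davenport--Hua \eqref{vin} bounds the inner exponential sum by $C_A/\log^A N$ \emph{uniformly} in $\chi$ and $\lambda$. Hence $\|A_N(f,g)\|_{\ell^2}\le C_A\log^{-A}(N)\,\|f\|_{\ell^2}\|g\|_{\ell^2}$ directly, with no bilinear multiplier analysis whatsoever. Since $\sum_{N\in I_\rho}\log^{-A}N<\infty$ for $A>1$, the sup over $N_k\le N\le N_{k+1}$ and the sum over $k$ are controlled immediately; the factor $\sqrt K$ (which is not even sharp here) enters only through a trailing Cauchy--Schwarz passing from $\ell^2$ to $\ell^1$.

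So your transference to $\Z/\bar J\Z$ and the Cauchy--Schwarz extraction of $\sqrt K$ match the paper, but the ``hardest step'' you anticipate---the bilinear square-function estimate requiring Bourgain's finer circle-method machinery---is simply not needed: the M\"obius weight collapses the bilinear difficulty to a linear one via the Cauchy--Schwarz/spectral-theorem trick above.
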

The classical Calder\'{o}n transference principal (see Proposition \ref{CalderonP} and \ref{CalderonP2}) allows us to obtain from Theorem \ref{CalderonI} the following:
\begin{thm}\label{MaximalI}Let $(X,\A,T,\mu)$ be an ergodic dynamical system, and let $f,g \in L^2(X,\mu)$ . Then, 
for any $\rho>1$, %for any $\delta>0$, %there exist $N_0 \in I_\rho $ independent of 
%$f$ and $g$ such that  
for any $K \geq 1$,
\begin{eqnarray}\label{BourgainMaximal2}
\sum_{k=1}^{K}\Big\|M_{N_k,N_{k+1}}(f,g)\Big|\Big\|_1< 4C .\sqrt{K}.\big\|f\big\|_{2}\big\|g\big\|_{2},
\end{eqnarray}
where $\bmnu$ is the Liouville function or the M\"{o}bius function.
\end{thm}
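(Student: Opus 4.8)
\textbf{Proof of Theorem~\ref{MaximalI} (proposal).} The plan is to deduce Theorem~\ref{MaximalI} from the discrete estimate of Theorem~\ref{CalderonI} by the Calder\'{o}n transference principle, exactly in the spirit of Propositions~\ref{CalderonP} and~\ref{CalderonP2}; the only new feature is that the inequality is now applied to the telescoped maximal functions $M_{N_k,N_{k+1}}$ and summed over the lacunary scales $N_k=\En{\rho^k}$. First I would fix the ergodic system $(X,\A,T,\mu)$, functions $f,g\in L^2(X,\mu)$, the ratio $\rho>1$, the integer $K\ge 1$, and write $\bar N=N_{K+1}$ for the largest scale that occurs. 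For a point $x\in X$ and a large integer $J\ge 1$ I would introduce the two truncated sequences
$$\phi_x(i)=f(T^i x)\,\1_{\{0\le i\le J+\bar N\}}(i),\qquad \psi_x(i)=g(T^i x)\,\1_{\{-\bar N\le i\le J\}}(i),$$
which lie in $\ell^2(\Z)$.

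The key elementary observation is the pointwise identity
$$\frac1N\sum_{n=1}^{N}\bmnu(n)f\big(T^n(T^j x)\big)g\big(T^{-n}(T^j x)\big)=\frac1N\sum_{n=1}^{N}\bmnu(n)\phi_x(j+n)\psi_x(j-n),$$
valid for every $0\le j\le J$ and every $1\le N\le\bar N$, since then both indices $j+n$ and $j-n$ remain inside the support windows chosen above. Applying this to the two averages occurring in the definition of $M_{N_k,N_{k+1}}$ gives $M_{N_k,N_{k+1}}(f,g)(T^j x)\le m_{N_k,N_{k+1}}(\phi_x,\psi_x)(j)$ for all $0\le j\le J$ and all $1\le k\le K$. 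Summing over $0\le j\le J$ and over $1\le k\le K$ and invoking Theorem~\ref{CalderonI}, I would obtain
$$\sum_{k=1}^{K}\sum_{j=0}^{J}M_{N_k,N_{k+1}}(f,g)(T^j x)\le\sum_{k=1}^{K}\big\|m_{N_k,N_{k+1}}(\phi_x,\psi_x)\big\|_{\ell^1(\Z)}\le C\sqrt{K}\,\big\|\phi_x\big\|_{\ell^2(\Z)}\big\|\psi_x\big\|_{\ell^2(\Z)}.$$

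Next I would integrate in $x$ against $\mu$. On the left, the measure preservation of $T$ gives $\int_X M_{N_k,N_{k+1}}(f,g)(T^j x)\,d\mu(x)=\big\|M_{N_k,N_{k+1}}(f,g)\big\|_1$ for every $j$, so the left-hand side equals $(J+1)\sum_{k=1}^{K}\big\|M_{N_k,N_{k+1}}(f,g)\big\|_1$. On the right, Cauchy--Schwarz in $x$ together with $\int_X\|\phi_x\|_{\ell^2(\Z)}^2\,d\mu=(J+\bar N+1)\|f\|_2^2$ and the analogous identity for $\psi_x$ yields the bound $C\sqrt{K}\,(J+\bar N+1)\,\|f\|_2\|g\|_2$. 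Dividing through by $J+1$ leaves the factor $(J+\bar N+1)/(J+1)$ on the right, and letting $J\to\infty$ with $\bar N=N_{K+1}$ fixed makes this ratio tend to $1$, so I recover
$$\sum_{k=1}^{K}\big\|M_{N_k,N_{k+1}}(f,g)\big\|_1\le C\sqrt{K}\,\|f\|_2\|g\|_2,$$
which is~\eqref{BourgainMaximal2} (with ample room inside the constant $4C$).

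The step I expect to require the most care is the window bookkeeping. One must choose the supports of $\phi_x$ and $\psi_x$ large enough that the identity above holds simultaneously for every $0\le j\le J$ and every length $N\le N_{K+1}$ (so that no boundary index is truncated), yet small enough that the $\ell^2$-mass of each truncation exceeds $(J+1)\|f\|_2^2$, respectively $(J+1)\|g\|_2^2$, only by the additive boundary term $O(N_{K+1})$ that is \emph{independent} of $J$. It is precisely this additive --- rather than multiplicative --- defect that vanishes in the limit $J\to\infty$, and which guarantees that the transferred constant is independent of $K$.
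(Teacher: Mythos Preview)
Your proposal is correct and follows essentially the same Calder\'{o}n transference argument as the paper: truncate the orbit to define $\ell^2(\Z)$ functions, apply Theorem~\ref{CalderonI}, integrate in $x$ using measure preservation and Cauchy--Schwarz, then divide out the length of the $j$-window. The only cosmetic difference is that the paper uses a fixed symmetric window $[-2\bar N,2\bar N]$ for both $\phi_x,\psi_x$ with $j\in[-\bar N,\bar N]$ (yielding the constant $4C$ directly), whereas you let the window grow with $J$ and pass to the limit $J\to\infty$ to obtain the slightly sharper constant $C$.
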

\noindent{}Let us give the proof of Theorem \ref{MaximalI}.

\begin{proof}[\textbf{Proof of Theorem \ref{MaximalI}}.] The proof goes, as in the proof of Proposition \ref{CalderonP} and 
\ref{CalderonP2}. Let $\bar{N} = N_{K+1}$ and $\bar{J} \gg \bar{N}$.  Put
$$\phi_x(n)=\left\{
              \begin{array}{ll}
                f(T^nx), & \hbox{if $n \in [-2\bar{N},2\bar{N}]$;} \\
                0, & \hbox{if not,}
              \end{array}
            \right.$$
and
$$\psi_x(n)=\left\{
              \begin{array}{ll}
                g(T^nx), & \hbox{if $n \in [-2\bar{N},2\bar{N}]$;} \\
                0, & \hbox{if not,}
              \end{array}
            \right.
$$
Then, by Theorem \ref{CalderonI}, we have
\begin{eqnarray*}
\sum_{k=1}^{K}\Big\|
m_{N_k,N_{k+1}}(\phi_x,\psi_x)\Big\|_{\ell^1(\Z)}<C \sqrt{K}   \big\|\phi_x\big\|_{\ell^2(\Z)} \big\|\psi_x\big\|_{\ell^2(\Z)}.
\end{eqnarray*}
We thus get
$$\sum_{k=1}^{K} \Big(
\sum_{|j| \leq \bar{N}} m_{N_k,N_{k+1}}(\phi_x,\psi_x)(j)\Big)< C \sqrt{K} \big\|\phi_x\big\|_{\ell^2(\Z)} \big\|\psi_x\big\|_{\ell^2(\Z)},$$
which can be rewritten as follows
\begin{eqnarray*}
\sum_{k=1}^{K} \sum_{|j| \leq \bar{N}} \Big(M_{N_k,N_{k+1}}(f,g)(T^jx)\Big) 
 < C \sqrt{K}\Big(\sum_{|n| \leq 2 \bar{N}}|f|^2(T^nx)\Big)^{\frac12}
\Big(\sum_{|n| \leq 2 \bar{N}}|g|^2(T^nx)\Big)^{\frac12} . 
\end{eqnarray*}
Integrating and applying H\"{o}lder inequality we obtain
$$\sum_{k=1}^{K}\Big\|M_{N_k,N_{k+1}}(f,g)\Big\|_1<4 C \sqrt{K}\big\|f\big\|_2 \big\|g\big\|_2,$$
since $T$ is measure preserving, and this finish the proof of the theorem.
\end{proof}
We proceed now to the proof of Theorem \ref{CalderonI}. Our proof follows Bourgain's arguments combined with
Davenport-Hua estimation.
\begin{proof}[\textbf{Proof of Theorem \ref{CalderonI}}.] We proceed by using the finitary method as in \cite{DemeterLT}. Let 
$\bar{J}$ be a large integer, $f,g \in \ell^2(\Z)$ and $\delta>0$. Denote by $\F$ the discrete Fourier transform on $\Z_{\bar{J}}$.  
%$\ell^2(\Z_J)$ to $L^2(\T)$, $\T$ is the circle. 
We recall that
$\F(f)(\chi)=\sum_{n \in \Z_{\bar{J}}}f(n)\chi(-n)$, for $\chi \in {\widehat{\Z_{\bar{J}}}}$, we still denote by $f$ the $\bar{J}$-periodic
function associated to $f$. 
%the $\bar{J}$-periodic function associated to $f$ is defined as follows. Let us define first the truncure function of $f$. Put
%$$
%\widetilde{f}(j)=\begin{cases}
 %                  f(j) ~~~~~\text{if}~~ 1\leq j \leq \bar{J}\\
  %                 0    ~~~~~~~~~\text{if not. }
  %                \end{cases}
  %                $$
%The $\bar{J}$-periodic function associated to $f$ is the $\bar{J}$-periodic function associated to $\widetilde{f}.$                    

For any $j \in \Z$, put
$$\bmnu_j(n)=\bmnu(j-n),~~n \in \Z.$$
Of course $\bmnu$ is extended to the negative integers $\Z_{-}$ in the usual fashion.\\

Obviously, we have
\begin{eqnarray}\label{conv}
\frac1{N}\sum_{n=1}^{N}\bmnu(n) f(j+n)g(j-n)&=& \Big(f*\Big(\frac1{N}. \bmnu_j.g.\1_{[j-N,j[} \Big)\Big)(2j),
\end{eqnarray}
where $*$ is the operation of convolution given by
$$(a*b)(j)=\sum_{n \in \Z}a(j-n)b(n),~~~~~~~~~\forall a,b \in \ell^2(\Z) \textrm{~~and~~} \forall j \in \Z.$$
Furthermore, by a standard arguments, we can rewrite \eqref{conv} as follows
\begin{eqnarray}\label{Fourier}
\frac1{N}\sum_{n=1}^{N}\bmnu(n) f(j+n)g(j-n)&=& \F^{-1}\big( \F(f).\F(G_{N,j})\big)(2j),
\end{eqnarray}
where $G_{N,j}=\frac1{N}. \bmnu_j.g.\1_{[j-N,j]}$. Therefore
\begin{eqnarray}\label{FourierII}
\Big|\frac1{N}\sum_{n=1}^{N}\bmnu(n) f(j+n)g(j-n)\Big|
&=&\Big|\int_{{\widehat{\Z_{\bar{J}}}}} \F(f)(\chi) \F(G_{N,j})(\chi) \chi(2j) d\chi\Big| \quad \quad \quad 
\quad \quad \nonumber\\
&& \leq \int_{{\widehat{\Z_{\bar{J}}}}} \big|\F(f)(\chi)\big| \big|\F(G_{N,j})(\chi)\big|d\chi
\end{eqnarray}
But, a straightforward computations gives
\begin{eqnarray}\label{FourierIII}
&&\int_{{\widehat{\Z_{\bar{J}}}}} \big|\F(f)(\chi)\big| \big|\F(G_{N,j})(\chi)\big|d\chi \nonumber\\
&=&\int_{{\widehat{\Z_{\bar{J}}}}} \Big|\sum_{n \in \Z_{\bar{J}}}f(n)\chi(-n)\Big|
\Big|\frac1{N}\sum_{n=1}^{N}\bmnu(n)g(j-n) \chi(-n)\Big|d\chi. \quad \quad\quad \quad
\end{eqnarray}
Now, applying Cauchy-Schwarz inequality, we get
\begin{eqnarray*}
 &&\Big|\frac1{N}\sum_{n=1}^{N}\bmnu(n) f(j+n)g(j-n)\Big|^2\\
 &\leq& \Big(\int_{{\widehat{\Z_{\bar{J}}}}} \Big|\sum_{n \in \Z_{\bar{J}}}f(n)\chi(-n)\Big|^2 d\chi\Big) 
 \Big(\int_{\Z_{\bar{J}}} \Big|\frac1{N}\sum_{n=1}^{N}\bmnu(n)g(j-n) \chi(n)\Big|^2 d\chi\Big).
\end{eqnarray*}
Integrating, we see that
\begin{eqnarray}\label{Max-I1}
&&\E_{\Z_{\bar{J}}}\Big( \Big|\frac1{N}\sum_{n=1}^{N}\bmnu(n) f(j+n)g(j-n)\Big|^2\Big) \nonumber\\
&\leq& \|f\|_2^2 \E_{\Z_{\bar{J}}} \Big( \int \Big|\frac1{N}\sum_{n=1}^{N}\bmnu(n)g(j-n) \chi(n)\Big|^2 d\chi\Big) \nonumber\\
&\leq& \|f\|_2^2 \int \E_{\Z_{\bar{J}}} \Big( \Big|\frac1{N}\sum_{n=1}^{N}\bmnu(n)g(j-n) \chi(n) \Big|^2 \Big) d\chi
\end{eqnarray}
We thus need to estimate the RHS of the inequality \eqref{Max-I1}. For that, write
$$ \E_{\Z_{\bar{J}}}\Big(\Big|\frac1{N}\sum_{n=1}^{N}\bmnu(n)g(j-n) \chi(n)\Big|^2\Big)=
\E_{\Z_{\bar{J}}}\Big( \Big|\frac1{N}\sum_{n=1}^{N}\bmnu(n)(U^{-n}g)(j) \chi(n) \Big|^2\Big),$$
where $U$ is the Koopman operator of the shift map $S$. Consequently, by the spectral theorem,  we have
\[
\Big\|\frac1{N}\sum_{n=1}^{N}\bmnu(n)g(S^{-n}j) \chi(n)\Big\|_{\ell^2(\Z_{\bar{J}})}=
\Big\|\frac1{N}\sum_{1 \leq n\leq N} 
\bmnu(n) \lambda^n \chi(n)  \Big\|_{L^2(\sigma_g)},
\]
where $\sigma_g$ is the spectral measure of $g$.\footnote{Recall that $\sigma_g$ 
is a finite  measure on the circle determined by its Fourier transform given by $\widehat{\sigma}_g(n)=<U^ng,g>$.} 
Hence, by Davenport-Hua's estimation~\eqref{vin}, for each $A>0$, we get
\begin{equation}
  \label{eq:Dav2}
  \Big\|\frac1{N}\sum_{n=1}^{N}\bmnu(n)g(S^{-n}j) \chi(n)\Big\|_{\ell^2(\Z_{\bar{J}})} \leq \frac{C_A}{{\big(\log(N)\big)^{A}}}.\|g\|_{\ell^2(\Z_{\bar{J}})},
\end{equation}
where $C_A$ is a constant that depends only on $A$.\\

Combining \eqref{Max-I1} and \eqref{eq:Dav2}, we can rewrite \eqref{Max-I1} as follows
\begin{eqnarray}\label{Max-I2}
\nonumber \E_{\Z_{\bar{J}}}\Big( \Big|\frac1{N}\sum_{n=1}^{N}\bmnu(n) f(j+n)g(j-n)\Big|^2\Big)
\leq  \frac{C_A^2}{{\big(\log(N)\big)^{2A}}}.\|f\|_{\ell^2(\Z_{\bar{J}})}^2.\|g\|_{\ell^2(\Z_{\bar{J}})}^2.,
\end{eqnarray}
Form this, it follows that for any $k=1,\cdots ,K$, we have
$$\sum_{\overset{N_k \leq N \leq  N_{k+1}}{N \in I_\rho}}\E_{\Z_{\bar{J}}}\Big( \Big|\frac1{N}\sum_{n=1}^{N}\bmnu(n) f(j+n)g(j-n)\Big|^2\Big)
$$$$
\leq \sum_{\overset{N_k \leq N \leq  N_{k+1}}{N\in I_\rho}} \frac{C_A^2}{{\big(\log(N)\big)^{2A}}} \|f\|_{\ell^2(\Z_{\bar{J}})}^2
\|g\|_{\ell^2(\Z_{\bar{J}})}^2.$$
Whence
$$\Big\|\sup_{\overset{N_k \leq N \leq  N_{k+1}}{N\in I_\rho}}
\Big|\frac1{N}\sum_{n=1}^{N}\bmnu(n) f(j+n)g(j-n) \Big|\Big\|_{\ell^2(\Z_{\bar{J}})} \leq$$
$$ 
\sqrt{\sum_{\overset{N_k \leq N \leq N_{k+1}}{N \in I_\rho}}\frac{C_A^2}{{\big(\log(N)\big)^{2A}}}}
\|f\|_{\ell^2(\Z_{\bar{J}})}\|g\|_{\ell^2(\Z_{\bar{J}})},$$
Thus by the elementary inequality $\sqrt{a+b} \leq \sqrt{a}+\sqrt{b}, a,b \geq 0$, it follows that
$$\sum_{k=1}^{K}\Big\|\sup_{\overset{N_k \leq N \leq  N_{k+1}}{N\in I_\rho}}
\Big|\frac1{N}\sum_{n=1}^{N}\bmnu(n) f(j+n)g(j-n) \Big|\Big\|_{\ell^2(\Z_{\bar{J}})}$$ $$  
\leq\sum_{k=1}^K\sum_{\overset{N_k \leq N \leq N_{k+1}}{N \in I_\rho}}\frac{C_A}{{\big(\log(N)\big)^{A}}}
\|f\|_{\ell^2(\Z_{\bar{J}})}\|g\|_{\ell^2(\Z_{\bar{J}})},$$
Applying again Cauchy-Schwarz inequality, we conclude that 
$$\sum_{k=1}^{K}\Big\|\sup_{\overset{N_k \leq N \leq  N_{k+1}}{N\in I_\rho}}
\Big|\frac1{N}\sum_{n=1}^{N}\bmnu(n) f(j+n)g(j-n) \Big|\Big\|_{\ell^1(\Z_{\bar{J}})}$$ 
$$\leq C. \sqrt{K}
\|f\|_{\ell^2(\Z_{\bar{J}})}\|g\|_{\ell^2(\Z_{\bar{J}})}.$$
In the same manner we can see from \eqref{Max-I1} and \eqref{eq:Dav2} that we have
$$
\sum_{k=1}^{K}\Big\|\frac1{N_k}\sum_{n=1}^{N_k}\bmnu(n) f(j+n)g(j-n)\Big\|_{\ell^2(\Z_{\bar{J}})}$$ 
$$\leq  
 \sum_{k=1}^{K}\frac{C_A}{\log(N_k)^{A}}
\|f\|_{\ell^2(\Z_{\bar{J}})}\|g\|_{\ell^2(\Z_{\bar{J}})} < C \|f\|_{\ell^2(\Z_{\bar{J}})}\|g\|_{\ell^2(\Z_{\bar{J}})}.$$
Therefore, by the triangle inequality, we get 
$$ \sum_{k=1}^{K}
\Big\|m_{N_k,N_{k+1}}(f,g)\Big\|_{\ell^1(\Z_{\bar{J}})} < C. \sqrt{K}
\big\|f\big\|_{\ell^2(\Z_{\bar{J}})}\big\|g\big\|_{\ell^2(\Z_{\bar{J}})}.$$
Letting ${\bar{J}} \longrightarrow +\infty$, we conclude that we have
$$ \sum_{k=1}^{K}
\Big\|m_{N_k,N_{k+1}}(f,g)\Big\|_{\ell^1(\Z)} < C. \sqrt{K}
\big\|f\big\|_{\ell^2(\Z)}\big\|g\big\|_{\ell^2(\Z)}.$$

and the proof of the theorem is complete.
\end{proof}

\begin{rem}An alternative proof similar to Bourgain's proof can be obtained by using the Fourier transform instead 
of the discrete Fourier 
transform to obtain the same inequalities.
 We recall that the Fourier transform is defined on abelian group $G$ by 
 $$\mathcal{F}(f)(\chi)=\int_{G}f(g)\overline{\chi(g)} dg,$$
where $\chi$ is a character of $G$. For a nice account on the discrete Fourier transform and related topics we refer to \cite{Stein}. 
For the classical Fourier analysis on groups, we refer to \cite{Rudin}.
 \end{rem}

Now, we are able to give the proof of our main result Theorem \ref{Mainofmain}.
\begin{proof}[\textbf{Proof of Theorem \ref{Mainofmain}}.] By a standard argument, we may assume that the map $T$ is ergodic.
Let us assume also that $f,g$ are in $L^\infty(X,\mu)$. Therefore, by Theorem \ref{MaximalI}, it is easily seen that 
\begin{eqnarray}\label{Key-conv}
\frac{1}{K}\sum_{k=1}^{K}\Big\|M_{N_k,N_{k+1}}(f,g)\Big\|_1 \tend{K}{+\infty}0
\end{eqnarray}
Hence, by the same arguments as in \cite{Thouvenot} and \cite{Demeter}, we see that for almost every point $x \in X$, we have
\[
\frac1{[\rho^m]}\sum_{n=1}^{[\rho^m]}\bmnu(n)f(T^nx)g(T^{-n}x)  \tend{m}{+\infty}0,
\]
since the $L^2$-limit is zero by Green-Tao theorem \cite[Theorem 1.1]{GreenT}  combined with Chu's result  \cite[Theorem 1.3]{Chu}.\\

 For the reader's convenience, let us point out that the proof in \cite{Thouvenot} and \cite{Demeter} is obtained by contradiction. Indeed, we assume that the almost everywhere convergence does not hold. Then, we construct an increasing sequence $(N_k)$ for which we establish with the help of the Markov trick that \eqref{Key-conv} can not hold.\\ 

Now, it follows that if $[\rho^m]\leq N < {[\rho^{m+1}]+1}$,  
\begin{eqnarray*}
&&\Big|\frac1{N}\sum_{n=1}^{N}\bmnu(n)f(T^nx)g(T^{-n}x) \Big|\\
&=&\Big| \frac1{N}\sum_{n=1}^{[\rho^m]}\bmnu(n)f(T^nx)g(T^{-n}x) + \frac1{N}\sum_{n=[\rho^m]+1}^{N}\bmnu(n)f(T^nx)g(T^{-n}x)\Big|\\
&\leq & \Big| \frac1{[\rho^m]}\sum_{n=1}^{[\rho^m]}\bmnu(n)f(T^nx)g(T^{-n}x) \Big|+\frac{\big\|f\big\|_{\infty} \big\|g\big\|_{\infty}}{[\rho^m]} (N-[\rho^m]-1)\\
&\leq & \Big|\frac1{[\rho^m]}\sum_{n=1}^{[\rho^m]}\bmnu(n)f(T^nx)g(T^{-n}x)\Big|+\frac{\big\|f\big\|_{\infty} \big\|g\big\|_{\infty}}{[\rho^m]} ([\rho^{m+1}]-[\rho^m]).\\
\end{eqnarray*}
Letting $m$ goes to infinity, we get
\[
\Big| \frac1{[\rho^m}\sum_{n=1}^{[\rho^m]}\bmnu(n) f(T^nx)g(T^{-n}x) \Big| \tend{m}{+\infty}0,
\]
and
\[
\frac{||f||_{\infty}\big\|g\big\|_{\infty}}{[\rho^m]} ([\rho^{m+1}]-[\rho^m])\tend{m}{+\infty}\big\|f\big\|_{\infty} \big\|g\big\|_{\infty}.(\rho-1),
\]
for any $\rho>1$.  Letting $\rho \longrightarrow 1$ we conclude that
\[
\frac1{N}\sum_{n=1}^{N}\bmnu(n)f(T^nx)g(T^{-n}x) \tend{N}{+\infty}0, {\textrm{~~a.e.,}}
\]

To finish the proof, notice that for any $f,g \in L^2(X,\mu)$, and any $\varepsilon>0$, there exist $f_1,g_1 \in L^{\infty}(X,\mu)$ such that
$\Big\|f-f_1\Big\|_2 < \sqrt{\varepsilon},$ and $\Big\|g-g_1\Big\|_2 < \sqrt{\varepsilon}$.
 Moreover, by Cauchy-Schwarz inequality, we have
\begin{eqnarray*}
&&\Big|\frac1{N}\sum_{n=1}^{N}\bmnu(n)(f-f_1)(T^nx)(g-g_1)(T^{-n}x) \Big|\\
&\leq&  \frac1{N}\sum_{n=1}^{N}\big|(f-f_1)(T^nx)\big|\big|(g-g_1)(T^{-n}x)\big|\\
&\leq& \Big(\frac1{N}\sum_{n=1}^{N}\big|(f-f_1)(T^nx)\big|^2\Big)^{\frac12}
\Big(\frac1{N}\sum_{n=1}^{N}\big|(g-g_1)(T^nx)\big|^2\Big)^{\frac12}
\end{eqnarray*}
Applying the ergodic theorem, it follows that for almost all $x \in X$, we have
\[
\limsup_{N \longrightarrow +\infty}\Big|\frac1{N}\sum_{n=1}^{N}\bmnu(n)(f-f_1)(T^nx)(g-g_1)(T^{-n}x) \Big|<  \varepsilon.
\]
Whence, we can write
\begin{eqnarray*}
&&\limsup_{N \longrightarrow +\infty}\Big|\frac1{N}\sum_{n=1}^{N}\bmnu(n)f(T^nx)g(T^{-n}x) \Big|\\
 &\leq&
\limsup_{N \longrightarrow +\infty}\Big|\frac1{N}\sum_{n=1}^{N}
\bmnu(n)f_1(T^nx)g(T^{-n}x)\Big|+
\limsup_{N \longrightarrow +\infty}\Big|\frac1{N}\sum_{n=1}^{N}\bmnu(n)f(T^nx)g_1(T^{-n}x)\Big|\\
&+&\limsup_{N \longrightarrow +\infty}\Big|\frac1{N}\sum_{n=1}^{N}\bmnu(n)f_1(T^nx)g_1(T^{-n}x)\Big|\\
&\leq& \varepsilon+\limsup_{N \longrightarrow +\infty}\Big|\frac1{N}\sum_{n=1}^{N}
\bmnu(n)f_1(T^nx)g(T^{-n}x)\Big|+
\limsup_{N \longrightarrow +\infty}\Big|\frac1{N}\sum_{n=1}^{N}\bmnu(n)f(T^nx)g_1(T^{-n}x)\Big|.
\end{eqnarray*}
We thus need to estimate
$$\limsup_{N \longrightarrow +\infty}\Big|\frac1{N}\sum_{n=1}^{N}
\bmnu(n)f_1(T^nx)g(T^{-n}x)\Big|,$$
and
$$
\limsup_{N \longrightarrow +\infty}\Big|\frac1{N}\sum_{n=1}^{N}\bmnu(n)f(T^nx)g_1(T^{-n}x)\Big|.$$
In the same manner we can see that
\begin{eqnarray*}
&&\limsup_{N \longrightarrow +\infty}\Big|\frac1{N}\sum_{n=1}^{N}
\bmnu(n)f_1(T^nx)(g-g_1)(T^{-n}x)\Big|\\
&\leq& \limsup_{N \longrightarrow +\infty}\Big(\frac1{N}\sum_{n=1}^{N}|f_1(T^nx)|^2\Big)^{\frac12}
\limsup_{N \longrightarrow +\infty}\Big(\frac1{N}\sum_{n=1}^{N}|(g-g_1)(T^nx)|^2\Big)^{\frac12}\\
&\leq& \big\|f_1\big\|_2 \big\|g-g_1\big\|_2\\
&\leq& \Big(\big\|f\big\|_2+\sqrt{\varepsilon}\Big). \sqrt{\varepsilon}
\end{eqnarray*}
This gives
\begin{eqnarray*}
&&\limsup_{N \longrightarrow +\infty}\Big|\frac1{N}\sum_{n=1}^{N}
\bmnu(n)f_1(T^nx)g(T^{-n}x)\Big|\\
&\leq& \Big(\Big\|f\Big\|_2+\sqrt{\varepsilon}\Big). \sqrt{\varepsilon}+
\limsup_{N \longrightarrow +\infty}\Big|\frac1{N}\sum_{n=1}^{N}
\bmnu(n)f_1(T^nx)g_1(T^{-n}x)\Big|\\
&\leq& \Big(\big\|f\big\|_2+\sqrt{\varepsilon}\Big). \sqrt{\varepsilon}+0
\end{eqnarray*}
Summarizing, we obtain the following estimates
\begin{eqnarray*}
&&\limsup_{N \longrightarrow +\infty}\Big|\frac1{N}\sum_{n=1}^{N}\bmnu(n)f(T^nx)g(T^{-n}x) \Big|\\
 &\leq& \varepsilon+\Big(\big\|f\big\|_2+\sqrt{\varepsilon}\Big). \sqrt{\varepsilon}+\Big(\big\|g\big\|_2+\sqrt{\varepsilon}\Big). \sqrt{\varepsilon}
\end{eqnarray*}
Since $\varepsilon>0$ is arbitrary, we conclude that for almost every $x \in X$,
\[
\frac1{N}\sum_{n=1}^{N}\bmnu(n)f(T^nx)g(T^{-n}x) \tend{N}{+\infty}0.
\]
This complete the proof of the theorem.
\end{proof}

As a consequence of our proof, we have proved Theorem \ref{TDB}. Indeed, by taking $\bmnu=1$ in equations \eqref{Fourier} and \eqref{FourierII} ( see also Equation (2.15) in \cite{Bourgain-D}), we have, for any $j \in \Z_{\bar{J}}$,  
\begin{align}\label{Bequa:1}
m_{N_k,N_{k+1}}(f,g)(j)
\leq \int_{{\widehat{\Z_{\bar{J}}}}} \big|\F(f)(\chi) \big|.m_{N_k, N_{k+1}}(g_{\chi},1_{\bar{J}})(j) d\chi,
\end{align}
where $g_{\chi}(x)=g(x)\chi(x),$ for any $x \in \Z_{\bar{J}}.$ Since, for any $M_0,M_1 \in \N$, 
$$\F(G_{M_1,j})-\F(G_{M_0,j})=\F\big(G_{M_1,j}-G_{M_0,j}\big),$$
and
\begin{align}\label{BB:3}
\Big|\frac{1}{M_1}\sum_{n=1}^{M_1}f(j+n)g(j-n)
-\frac{1}{M_0}\sum_{n=1}^{M_0}f(j+n)g(j-n)\Big| \nonumber\\
\leq \int_{{\widehat{\Z_{\bar{J}}}}} 
\big|\F(f)(\chi)\big| \big|\F\big(G_{M_1,j}-G_{M_0,j}\big)\big|d\chi. 
\end{align}
Notice that 
$$\Big|\frac{1}{M_1}\sum_{n=1}^{M_1}g(j-n) \chi(-n)\Big|=
\Big|\frac{1}{M_1}\sum_{n=1}^{M_1}g(j-n) \chi(j-n)\Big|.$$
 Let us further notice that, obviously, if $F \in \ell^2(\Z)$, then $F_{\chi} \in \ell^2(\Z)$, with $\big\|F_\chi\|_2=\big\|F\|_2.$ \\

Now, as before,
integrating and applying Cauchy-Schwarz inequality combined with Parseval inequality, we obtain
\begin{align}\label{Bequa:2}
\E_{\Z_{\bar{J}}}\Bigg(m_{N_k,N_{k+1}}(f,g)\Bigg)
&\leq  \int_{{\widehat{\Z_{\bar{J}}}}} \big|\F(f)(\chi) \big|.\E_{\Z_{\bar{J}}}\Big(m_{N_k, N_{k+1}}(g_{\chi},1_{\bar{J}})(j)\Big) d\chi \nonumber\\
&\leq\Bigg(\int  \big|\F(f)(\chi) \big|^2 d\chi\Bigg)^{\frac12} 
\Bigg( \int_{{\widehat{\Z_{\bar{J}}}}}  \E_{\Z_{\bar{J}}}\Big(m_{N_k, N_{k+1}}(g_{\chi},1_{\bar{J}})(j)\Big)^2 d\chi\Bigg)^{\frac12}\\
&\leq \big\|f\big\|_{\ell^2(Z_J)}^2.\Bigg(\int_{0}^{1} \E_{\Z_{\bar{J}}}\Big(\big(m_{N_k, N_{k+1}}(1,g_{\chi})\big)^2\Big) d\chi\Bigg)^{\frac12}\nonumber,
\end{align}
The last inequality follows from the Parseval inequality.
\noindent{}We further have 
%\begin{align}\label{Bequa:4}
% \sum_{k=1}^{+\infty}\E_{\Z_{\bar{J}}}\Big(\big(m_{N_k, N_{k+1}}(\1_{\bar{J}},g_{\chi})\big)^2\Big) 
% &=\sum_{k=1}^{+\infty}\Big\|m_{N_k, N_{k+1}}(\1_{\bar{J}},g_{\chi})\Big\|_{\ell^2(\Z_{\bar{J}})}^2 \nonumber\\
 %&\leq C_\rho \|g_\chi\|_{\ell^2(\Z_{\bar{J}})}^2,
%\end{align}
\begin{align}\label{Bequa:4}
\sum_{k=1}^{K}\E_{\Z_{\bar{J}}}\Big(\big(m_{N_k, N_{k+1}}(\1_{\bar{J}},g_{\chi})\big)^2\Big) 
&=\sum_{k=1}^{K}\Big\|m_{N_k, N_{k+1}}(\1_{\bar{J}},g_{\chi})\Big\|_{\ell^2(\Z_{\bar{J}})}^2 \nonumber\\
&\leq A(K)^2 \|g_\chi\|_{\ell^2(\Z_{\bar{J}})}^2,
\end{align}
where $A(K)=\sqrt[4]{K}$, since, by \cite[(3) in the proof of Theorem 3.]{Thouvenot}, for any $F \in \ell^2(\Z)$, we have \footnote{Let us point out that here we consider only the linear case 
	$\Big(\frac{1}{N}\sum_{n=1}^{N})f(n+x)\Big)$ rather that the polynomials case $\Big(\frac{1}{N}\sum_{n=1}^{N})f(n+x^d)\Big)$, $ d \geq 2$. Therefore, it is a simple exercise to see that  for any $F \in \ell^2(\Z)$, we have
	$$\sum_{k=1}^{+\infty}\Big\|\Big(\big(m_{N_k, N_{k+1}}(\1,F)\big)\Big\|_2 \leq C_\rho \big\|F\big\|_2,$$

(see for instance, %by (5.11) from \cite[Proof of Theorem 5.2]{elabdal-Arxiv} or 
\cite[ Corollaire.6.4.3]{Weber})}
$$\sum_{k=1}^{K}\Big\|\Big(\big(m_{N_k, N_{k+1}}(\1,F)\big)\Big\|_2^2 \leq \sqrt{K} \big\|F\big\|_2,$$
%where $\frac{A(K)}{K} \longrightarrow 0$ as $K \longrightarrow +\infty$.  Indeed, this implies
%\begin{align*}
%&\sum_{k=1}^{K}\Big\|\Big(\big(m_{N_k, N_{k+1}}(\1,F)\big)\Big\|_2^2\\
%&\leq \sum_{k=1}^{K}\Big\|\Big(\big(m_{N_k, N_{k+1}}(\1,F)\big)\Big\|_2 \sup_{ 1  \leq k \leq K } \Big\|\Big(\big(m_{N_k, N_{k+1}}(\1,F)\big)\Big\|_2\\
% &\leq  A(K)^2 \big\|F\big\|_2^2.
%\end{align*}
Whence, again by Cauchy-Schwarz inequality
\begin{align}\label{eqf1}
\sum_{k=1}^{K}\Big\|m_{N_k,N_{k+1}}(f,g)\Big\|_{\ell^1(\Z_{\bar{J}})}
&\leq \sqrt{K}\Big(\sum_{k=1}^{K}\Big\|m_{N_k,N_{k+1}}(f,g)\Big\|_{\ell^2(\Z_{\bar{J}})}^2\Big)^
{\frac12}\\
&\leq \sqrt[4]{K^3} \big\|f\big\|_{\ell^2(\Z_{\bar{J}})} \big\|g\big\|_{\ell^2(\Z_{\bar{J}})},
\end{align}
%where $A(K)=C_\rho \sqrt{K}.$
Letting $\bar{J} \longrightarrow +\infty$, we obtain 
\begin{align}\label{eqf2}
\sum_{k=1}^{K}\Big\|m_{N_k,N_{k+1}}(f,g)\Big\|_{\ell^1(\Z)}
\leq \sqrt[4]{K^3} \big\|f\big\|_{\ell^2(\Z)} \big\|g\big\|_{\ell^2(\Z)}.
\end{align}
Applying now the same arguments as in the proof of Theorem \ref{Mainofmain} the desired result follows.
\begin{rem}Notice that our proof yields that the convergence almost sure holds for the short interval. Thanks to Zhan's estimation (equation \eqref{Zhantao}). Let us notice also that an alternative proof to Theorem 2.1 can obtained by using the spectral regularization principal in \cite[Chap. 6]{Weber} since the shift map on $\Z$ has a simple Lebesgue spectrum.
\end{rem}
We end this section by stating the following conjecture.
\begin{xconj} Let $\bmnu$ be a aperiodic bounded multiplicative function and $l \geq 2$ a positive integer.
If $T_1 ,\cdots,T_l$ are commuting measure preserving transformations acting on the same probability space $(X,\A,\mu)$, then for all $f_1 ,\cdots,f_l \in L^\infty(X,\mu)$, for almost all $x \in X$, we have
$$\frac1{N}\sum_{n=1}^{N}\bmnu(n) \prod_{j=1}^{l}f_j({T_j}^{n}x)\tend{N}{+\infty}0.$$
\end{xconj}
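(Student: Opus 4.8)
The plan is to follow the same route as the proof of Theorem~\ref{MainII}, namely to reduce the weighted average to a self-correlation estimate and then invoke the KBSZ criterion (Theorem~\ref{KBSZ}). First, I would fix $x$, set
$$F(n)=\prod_{j=1}^{l}f_j(T_j^{n}x),$$
which for a.e.\ $x$ is a bounded arithmetic function, and observe that since $\bmnu$ is aperiodic and bounded multiplicative, Theorem~\ref{KBSZ} reduces the conjecture to showing that for a.e.\ $x$ and all sufficiently large distinct primes $p,q$,
$$\frac1{N}\sum_{n=1}^{N}F(np)\overline{F(nq)}\tend{N}{+\infty}0.$$
As in the homogeneous case, I would also first subtract the means, using the multilinear telescoping argument from the proof of Theorem~\ref{MainII}, so as to reduce to the situation where $\int f_{j_0}\,d\mu=0$ for some index $j_0$.

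The second step is to recognize the shape of this correlation. Writing $T_j^{np}=(T_j^{p})^{n}$ and $T_j^{nq}=(T_j^{q})^{n}$, we have
$$F(np)\overline{F(nq)}=\prod_{j=1}^{l}f_j\big((T_j^{p})^{n}x\big)\,\overline{f_j}\big((T_j^{q})^{n}x\big),$$
which is precisely a $2l$-fold \emph{multilinear ergodic average} driven by the commuting measure preserving transformations $T_1^{p},\dots,T_l^{p},T_1^{q},\dots,T_l^{q}$. Thus the problem is reduced to the almost everywhere convergence of multilinear averages for a finite family of commuting transformations, together with the identification of the limit as the product of the individual integrals $\prod_j\int f_j\,d\mu\cdot\prod_j\int\overline{f_j}\,d\mu$, which vanishes once one factor has zero mean.

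The main obstacle is exactly this last reduction: the almost everywhere convergence of multilinear ergodic averages for general commuting transformations is Furstenberg's problem \cite{Fbook}, which is open. In the homogeneous case the relevant transformations $T_j^{p},T_j^{q}$ are all powers of a single $T$, so Assani's theorem (Theorem~\ref{TMA}) supplies both the convergence and the factorization of the limit; for general commuting $T_j$ no such tool is available. Consequently I would not expect an unconditional proof, and the realistic targets are twofold: first, a conditional statement proving the conjecture under the hypothesis that the relevant multilinear averages converge a.e.\ to the product of integrals, exactly paralleling the remark after Theorem~\ref{MainII}; and second, unconditional results for the structured classes where a.e.\ convergence is already known, such as weakly mixing PID systems and distal flows, via Gutman--Huang--Shao--Ye \cite{Ye1} and Huang--Shao--Ye \cite{Ye2}. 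Even granting a.e.\ convergence, a secondary difficulty is that for commuting rather than homogeneous transformations the limit need not factor as a product of integrals unless joint mixing or disjointness hypotheses are imposed, so forcing the limit to be zero is itself nontrivial outside the mixing setting.
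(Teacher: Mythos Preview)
The statement you are addressing is not a theorem in the paper but an open \emph{conjecture}; the paper gives no proof of it, and your own analysis correctly arrives at the reason why. Your reduction via the KBSZ criterion to the almost-everywhere convergence (with product limit) of the $2l$-fold averages along the commuting family $T_1^{p},\dots,T_l^{p},T_1^{q},\dots,T_l^{q}$ is exactly the natural strategy, and you are right that this runs straight into Furstenberg's problem for commuting transformations, which is open. The paper itself makes the parallel remark after Theorem~\ref{MainII} that a positive answer to Furstenberg's question would settle the homogeneous case in full generality; the conjecture you are looking at is the commuting-transformation analogue, and it is left unresolved.

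One minor correction to your list of ``realistic targets'': the results of Gutman--Huang--Shao--Ye \cite{Ye1} and Huang--Shao--Ye \cite{Ye2} that the paper cites concern the \emph{homogeneous} setting (powers of a single transformation), so they extend Theorem~\ref{MainII} rather than the conjecture. For genuinely commuting, non-homogeneous $T_1,\dots,T_l$ those references do not directly supply the a.e.\ convergence you would need, and, as you also observe, even norm convergence (which is known by Tao and Walsh) would not give the required product form of the limit without additional disjointness or mixing hypotheses. So your diagnosis of the obstruction is accurate, and there is nothing further to compare against in the paper.
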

We remind that  $\bmnu$ is a aperiodic multiplicative function if
$$
\left\{
  \begin{array}{ll}
   \bmnu(mn)=\bmnu(m)\bmnu(n), & \hbox{for all $m,n \in \N$ such that $m \wedge n=1$;~and}  \\
    \ds \frac1N\sum_{n=1}^{N}\bmnu(an+b) \tend{N}{+\infty}0 , & \hbox{for all $a,b \in \N^* \times \N$..}
  \end{array}
\right.
$$
\begin{thank}
The author wishes to express his thanks to XiangDong Ye and Benjamin Weiss
for a stimulating conversations on the subject. He is also thankful to
Nalini Anantharaman and to university of Strasbourg, IRMA, where
a part of this paper was written. The author
wishes also to thank Wilfrid Gangbo and the university of UCLA where the paper was
revised, for the invitation and hospitality. The author wishes further to express his 
cordial thanks to H. Daboussi  for bringing to his attention his paper \cite{DaboussiII} related to Theorem \ref{KBSZ}.  
\end{thank}

\end{document}